\documentclass[12pt]{article}
\title{The spectrum of the random environment \\ and localization of noise }
\author{Dimitris Cheliotis\\ TU Eindhoven, \sc Eurandom \and B\'alint Vir\'ag \\University of Toronto}
\date{\today}

\oddsidemargin 0in \topmargin 0in \headheight 0in \headsep 0in
\textheight 9in \textwidth 6.5in

\usepackage{amsfonts}
\usepackage{graphicx}
\usepackage{amsmath}
\usepackage{amsthm}
\usepackage{natbib}

\usepackage{amssymb,latexsym}

\theoremstyle{plain}
    \newtheorem{theorem}{Theorem}
    \newtheorem{lemma}[theorem]{Lemma}
    \newtheorem{question}{Question}
    
    \newtheorem{proposition}[theorem]{Proposition}

\theoremstyle{definition} 

    \newtheorem{remark}[theorem]{Remark}

\theoremstyle{remark} 

\newcommand{\disdis}{{\mathcal Y}}
\newcommand\snorm[1]{\|#1\|_*}

\newcommand\anp{{\mathcal X}}
\newcommand\mnote[1]{} 
\newcommand\be{\begin{equation}}

\newcommand\ee{\end{equation}}

\newcommand{\BBn}[1]{\mathbb{#1}}

\newcommand{\C}[1]{\mathcal{#1}}
 
\newcommand{\comment}[1]{}
\newcommand{\eps}{\varepsilon}

\newcommand{\R}{{\mathbb R}}

\newcommand{\ev}{\mbox{\bf E}}

\newcommand{\dist}{\mbox{\rm dist}}

\newcommand{\Var}{\operatorname{Var}}

\newcommand{\sm}{{\raise0.3ex\hbox{$\scriptstyle \setminus$}}}

\newcommand{\Tr}{\operatorname{Tr}}

\newcommand{\stab}{\operatorname{Stab}}

\newcommand{\gl}{\lambda}

\newcommand{\gb}{\beta}
\newcommand{\gep}{\varepsilon}
\newcommand{\gd}{\delta}

\newcommand{\EE}{{\bf E}}

\newcommand{\BB}[1]{\mathbb{#1}}
\newcommand{\G}{\mathcal{G}}

\usepackage{natbib}
\bibliographystyle{balint}

\begin{document}

\maketitle
\begin{abstract}
We consider random walk on a mildly random environment on finite
transitive $d$-regular graphs of increasing girth. After scaling and
centering, the analytic spectrum of the transition matrix converges
in distribution to a Gaussian noise.  An interesting phenomenon
occurs at  $d=2$: as the limit graph changes from a regular tree to
the integers, the noise becomes localized.
\end{abstract}

\begin{figure}[h]
\centering
\includegraphics[height=1.8in]{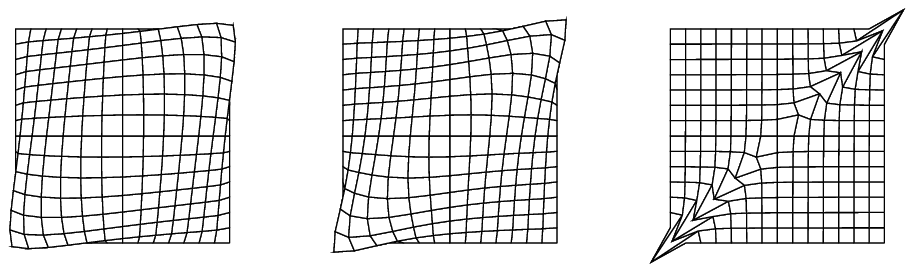}
\end{figure}

\section{Introduction}

Localization phenomena for eigenvalues of random media have
received considerable attention lately. Among the several
results, we point out a specific one. If one perturbs the
Laplacian of the nearest-neighbor graph on the integers by
a very small i.i.d.\ potential, its spectrum becomes pure
point immediately. In contrast, the same change on a higher
degree regular tree preserves the continuous spectrum, see
\cite{klein}.

The results of this paper point to an analogous, but distinct
 phenomenon. While the above localization phenomena are due
to large-scale behavior of the eigenvectors, we find a similar
dichotomy for the local behavior.

For this, we consider sequences of vertex-transitive finite graphs
$G_n$ with degree $d\ge 2$ and increasing girth (the length of the
shortest simple cycle); these converge locally to either the
integers ($d=2$) or a regular tree of higher degree ($d\ge 3$). On
these graphs, we consider a perturbation of the Laplacian, given
by the well-studied random walk on random environment
model with small noise (see, for example \cite{Z}. The noise in the Laplacian creates a
Gaussian noise in the spectrum. Our main discovery is that in the
limit this noise is local for the integers and has long-range
correlations for higher-degree trees.

We first consider more general graphs $G$. We will have the
following standing assumptions.

\bigskip

{\bf Assumptions.} $G$ is a vertex-transitive graph of
finite degree. Let $M$ denote the transition probability
matrix for nearest-neighbor simple random walk on $G$. For
the random environment on $G$ one randomly modifies
transition probabilities along edges. Its transition matrix
is defined as $M+\eps B$, where $B$ is a random matrix
satisfying the following.

The random variables $B_{u,v}$ have mean zero and variance
1. Further, the $B_{u,v}$'s corresponding to different $u$
are independent, and for some constant $c_1$ and all
vertices $u,v \in G$ we have
\begin{eqnarray}\label{boundedPert}
|B_{u, v}|&\le& c_1 M_{u, v}, \\
\label{zerosum}\sum_{g\in \stab(u)} B_{u,gv}&=&0.
\end{eqnarray}
Here  $\stab(u)$ is the stabilizer of $u$ in the
automorphism group of $G$. Condition \eqref{zerosum} means
that the sum of the bias of the random environment over
symmetric directions is zero, which is a bit stronger than
just requiring $M+\eps B$ to be stochastic. Condition
\eqref{boundedPert} implies that $B_{u,v}$ vanishes when
$uv$ is not an edge and that for small $\eps$ the entries
of $B+\eps M$ are in $[0,1]$. These assumptions will be in
effect for the rest of the paper.
\bigskip

For a finite $G$, let $\mu_{\eps}$ denote the empirical
probability measure of the eigenvalues of $M+\eps B$. Then
as $\eps \to 0$ we will show an expansion
$$
\mu_{\eps}=\mu_0+\frac{1}{2}\mu'' \eps^2  + o(\eps^2),
$$
where $\mu''$ is a random functionals. We define the  second difference quotient
$$
m_{\eps}=|G|^{1/2} \frac{\mu_{\eps}-\mu_{0}}{\eps^2},
$$
where the extra factor upfront makes the scaling consistent
as $G$ changes. This is the centered and scaled empirical
eigenvalue measure of $M+\eps B$. Let $\mathcal X$ denote
the space of complex functions analytic in a neighborhood of the closed unit disk. 
We  call $f$ real if it maps $\R$ to $\R$.
In Proposition \ref{epsto0}, we
will show that for $f\in \mathcal X$ the limit
\begin{equation}\label{eepsto0}
T_G(f) = \lim_{\eps \to 0} \int f \,d m_{\eps}
\end{equation}
exists. Our first theorem identifies the covariance
structure of  $T_G(f)$ as $f$ ranges over real functions in $\anp$.

For a possibly infinite vertex-transitive graph $G$ and complex $|\lambda|<1$, let
  $$
  p_\lambda(x)=1/(1-\lambda x),\qquad
  \mathcal G_\lambda=p_\lambda(M)=(I-\lambda M)^{-1},
  $$
the Green's function  corresponding to $G$. Let
$o$ be a marked vertex of $G$  and define
 \begin{equation}\label{H}
H_G(p_\lambda,p_\mu):=\frac{\gl \mu}{2}\,
\partial_{\gl ,\mu}^2\left(\gl^2\mu^2 \sum_{v\not=o} \ev\left[
(B\G_\lambda)_{ov}(B\G_\mu)_{ov}\right]\ev\left[(B\G_\lambda)_{vo}(B\G_\mu)_{vo}\right]
\Big] \right).
 \end{equation}
We will show in Lemma \ref{uec} and Proposition
\ref{hplemma} that $H_G(p_\lambda,p_\mu)$ is well-defined
and extends uniquely to a bilinear form $H_G$ on $\mathcal
X$.

\begin{theorem}\label{t2}
For any finite vertex-transitive graph $G$ and $f,g\in
\mathcal X$ we have
 $$
 \ev T_G(f)=0, \qquad
 \ev [T_G(f)T_G(g)]=H_G(f,g).
 $$
\end{theorem}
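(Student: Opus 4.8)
The plan is to reduce everything to the resolvent functions $p_\lambda$ and extend by bilinearity. Writing $\int p_\lambda\,d\mu_\eps=|G|^{-1}\Tr\,(I-\lambda(M+\eps B))^{-1}$ and expanding the resolvent as a Neumann series in $\eps$ around $\mathcal G_\lambda=(I-\lambda M)^{-1}$ (convergent for small $\eps$ and $|\lambda|<1$ since \eqref{boundedPert} makes $B$ supported on edges), the order-$\eps$ term of the trace is $\lambda\Tr(B\mathcal G_\lambda^2)$ and the order-$\eps^2$ term is $\lambda^2\Tr(B\mathcal G_\lambda B\mathcal G_\lambda^2)$. I would first observe that the linear term vanishes identically: since $M$ is invariant under $\stab(u)$, the function $a\mapsto(\mathcal G_\lambda^2)_{a,u}$ is constant on each $\stab(u)$-orbit, and summing $B_{u,a}$ over such an orbit gives $0$ by \eqref{zerosum}. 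Combined with Proposition \ref{epsto0}, this identifies $T_G(p_\lambda)=|G|^{-1/2}\lambda^2\,\Tr(B\mathcal G_\lambda B\mathcal G_\lambda^2)=:|G|^{-1/2}\lambda^2\,Y_\lambda$.

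For the mean, I would expand $\ev\,\Tr(B\mathcal G_\lambda B\mathcal G_\lambda^2)=\sum_u\sum_{a,c}\ev[B_{u,a}B_{u,c}]\,(\mathcal G_\lambda)_{a,u}(\mathcal G_\lambda^2)_{c,u}$, where only terms with equal first index survive by row-independence. For fixed $u,c$ the weight $a\mapsto(\mathcal G_\lambda)_{a,u}$ is again $\stab(u)$-invariant, so the same orbit-summation argument using \eqref{zerosum} forces $\sum_a\ev[B_{u,a}B_{u,c}](\mathcal G_\lambda)_{a,u}=0$, hence $\ev T_G(p_\lambda)=0$. Since the $p_\lambda$ generate a dense subspace of $\anp$ and $T_G$ is linear and continuous, $\ev T_G(f)=0$ for all $f$.

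The heart of the proof is the covariance. I would write $Y_\lambda=\sum_{p,q,a,c}B_{p,a}B_{q,c}(\mathcal G_\lambda)_{a,q}(\mathcal G_\lambda^2)_{c,p}$ and expand $\ev[Y_\lambda Y_\mu]$ as a fourth moment in $B$. Because distinct rows are independent and mean zero, the only surviving contributions pair the two rows of $Y_\lambda$ with the two rows of $Y_\mu$. The ``disconnected'' pairing (the two rows inside each trace coinciding) reproduces $\ev Y_\lambda\,\ev Y_\mu=0$ by the previous paragraph, and the fully diagonal term (all four first indices equal) vanishes by the same orbit argument applied to $\ev[B_{p,a}B_{p,c}B_{p,a'}B_{p,c'}]$. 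What remains are the two ``connected'' cross-pairings, $\{p=p',\,q=q'\}$ and $\{p=q',\,q=p'\}$ with $p\ne q$; using vertex-transitivity to factor out $|G|$ and to place $p=o$, $q=v$, the first becomes $|G|\sum_{v\ne o}\ev[(B\mathcal G_\lambda)_{ov}(B\mathcal G_\mu)_{ov}]\,\ev[(B\mathcal G_\lambda^2)_{vo}(B\mathcal G_\mu^2)_{vo}]$ and the second its crossed analogue (one factor $\mathcal G$ and one $\mathcal G^2$ in each of the $o$- and $v$-weighted terms).

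It remains to match this to $H_G$. The key identity is $\partial_\lambda(\lambda\mathcal G_\lambda)=\mathcal G_\lambda^2$, so that $(\mathcal G_\lambda^2)_{x,y}=\partial_\lambda(\lambda(\mathcal G_\lambda)_{x,y})$. Expanding $\partial^2_{\lambda,\mu}\big(\lambda^2\mu^2\sum_{v\ne o}\ev[(B\mathcal G_\lambda)_{ov}(B\mathcal G_\mu)_{ov}]\,\ev[(B\mathcal G_\lambda)_{vo}(B\mathcal G_\mu)_{vo}]\big)$ by the product rule produces four terms, in each of which exactly one $\mathcal G$ is promoted to $\mathcal G^2$ in the $\lambda$-slot and one in the $\mu$-slot; two of these are precisely my two connected pairings, and the other two are their images under exchanging the roles of $o$ and $v$. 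Since the underlying ordered-pair sum $\sum_{p\ne q}$ is symmetric under $(p,q)\mapsto(q,p)$ (and equals $|G|\sum_{v\ne o}$ by transitivity), these paired terms contribute equally, which is exactly what the prefactor $\tfrac12$ in \eqref{H} absorbs, yielding $\ev[T_G(p_\lambda)T_G(p_\mu)]=H_G(p_\lambda,p_\mu)$. Extending by bilinearity and continuity (Lemma \ref{uec}, Proposition \ref{hplemma}) gives the claim for all $f,g\in\anp$. I expect the main difficulty to be the bookkeeping of the within-row covariances $\ev[B_{u,a}B_{u,c}]$ (the entries of a row are correlated through \eqref{zerosum}, so this is not a plain Wick expansion) together with the precise combinatorial matching of the four derivative terms to the two connected pairings, including the factor $\tfrac12$.
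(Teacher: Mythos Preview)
Your approach is correct and genuinely different from the paper's. The paper works with monomials: it computes $\ev[T_G(z^i)T_G(z^j)]$ directly from the expansion $T_G(z^j)=|G|^{-1/2}\frac{j}{2}\sum_{v\ne w}Y_j(G,v,w)$ (the $v=w$ terms dropping out by \eqref{zeroentry}), observes that only $\{v,w\}=\{v',w'\}$ survives by row independence, and matches the result to the coefficient definition \eqref{hg} of $H_G$. It then extends to $\anp$ by polynomial density (Lemma~\ref{dense}). You instead target the resolvents $p_\lambda$ and the Green-function formula \eqref{H} directly, using the neat identity $\partial_\lambda(\lambda\mathcal G_\lambda)=\mathcal G_\lambda^2$ to turn the $(p,q)\leftrightarrow(q,p)$-symmetrized connected pairings into the four product-rule terms of $\partial^2_{\lambda,\mu}$; the factor $\tfrac12$ is indeed absorbed exactly as you describe. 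Your route bypasses the monomial bookkeeping and the separate verification in Proposition~\ref{hplemma} that the two definitions of $H_G$ agree, at the cost of a slightly more delicate fourth-moment case analysis.

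Two small points. First, the ``disconnected'' and ``fully diagonal'' contributions actually vanish \emph{deterministically}, not merely in expectation: whenever $p=q$ the factor $(B\mathcal G_\lambda)_{p,p}$ is zero by \eqref{zeroentry}, so the orbit argument there is unnecessary. Second, for the extension step you need continuity of $f\mapsto T_G(f)$ (hence of $\ev[T_G(\cdot)T_G(\cdot)]$) in the $\|\cdot\|_*$-norm, which is Lemma~\ref{uec2}; you cite Lemma~\ref{uec} and Proposition~\ref{hplemma}, which only control $H_G$. With that reference added, the argument is complete.
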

We now consider sequences of transitive graphs $G_n \to G$ locally,
which means that for every $r$ the $r$-neighborhood of a
fixed vertex of $G_n$ eventually agrees with that in $G$.
\begin{theorem}[Normality] \label{normality}
Let $G_n\to G$, an infinite graph, and let $G_n$, $G$
satisfy the assumptions above with uniform constant $c_1$.
 Then as $n\to\infty$,
jointly for all real $f\in \mathcal X$ the random variables
$T_{G_n}(f)$ converge weakly to mean zero normal random
variables $T(f)$. Moreover, for all $f,g\in \mathcal X$ we
have
$$
\ev\, [T_{G_n} (f)T_{G_n}(g)] \to \ev \,[T(f)T(g)] =
H_G(f,g).
$$
\end{theorem}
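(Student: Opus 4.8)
The plan is to split the statement into a central limit theorem for a weakly dependent stationary field and a separate verification that the covariances converge. By Theorem \ref{t2} we already know $\ev T_{G_n}(f)=0$ and $\ev[T_{G_n}(f)T_{G_n}(g)]=H_{G_n}(f,g)$, so the two remaining tasks are (a) to show $H_{G_n}(f,g)\to H_G(f,g)$, and (b) to show that the limit law is Gaussian. Since $f\mapsto T_{G_n}(f)$ is linear, any finite real linear combination $\sum_j a_j T_{G_n}(f_j)$ equals $T_{G_n}(\sum_j a_j f_j)$, with $\sum_j a_j f_j$ again real and in $\anp$; hence by the Cram\'er--Wold device the joint convergence for all real $f$ follows once each single $T_{G_n}(h)$ is shown to converge to a mean zero normal variable and the covariances are shown to converge.

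From the second-order term in the $\eps$-expansion underlying Proposition \ref{epsto0}, the quantity $T_{G}(h)$ is a quadratic functional of the entries of $B$; using vertex-transitivity it can be written as $T_{G_n}(h)=|G_n|^{-1/2}\sum_{u} W_u$, where each $W_u$ is a translate of $W_o$, has mean zero, and depends on the environment through the rows $B_{w,\cdot}$ via the Green's functions $\G_\gl$ attached at $u$. Because $|\gl|<1$ and $\|M\|\le 1$, the entries $(\G_\gl)_{xy}$ decay geometrically in $\dist(x,y)$; consequently $W_u$ is, up to an exponentially small tail, a function of the environment in a bounded neighborhood of $u$. This locality, together with the independence of the rows $B_{u,\cdot}$, makes $\{W_u\}$ a stationary field whose correlations decay exponentially in distance.

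To obtain normality I would run the cumulant (semi-invariant) method. Fix $k\ge 3$; the $k$-th joint cumulant of $T_{G_n}(h)$ equals $|G_n|^{-k/2}\sum_{u_1,\dots,u_k}\mathrm{cum}(W_{u_1},\dots,W_{u_k})$. Truncating each Green's function $\G_\gl=\sum_{m\ge 0}\gl^m M^m$ at level $R$ replaces $W_u$ by a genuinely finite-range functional $W_u^{(R)}$ depending only on the $R$-neighborhood of $u$; the boundedness $|B_{u,v}|\le c_1 M_{u,v}$ guarantees that all moments exist and yields a uniform geometric bound on $W_u-W_u^{(R)}$. For the finite-range field, a joint cumulant $\mathrm{cum}(W^{(R)}_{u_1},\dots,W^{(R)}_{u_k})$ vanishes unless the dependency graph on $\{u_1,\dots,u_k\}$ is connected, which forces the $u_i$ to lie in a cluster of bounded diameter; by transitivity there are only $O(|G_n|)$ such clusters, so this cumulant is $O(|G_n|^{1-k/2})\to 0$ for $k\ge 3$. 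Sending $R\to\infty$ after $n\to\infty$, with the geometric truncation bound controlling the error, kills all cumulants of order $\ge 3$, while the second cumulant converges to $H_G(h,h)$; this forces the limit to be the claimed Gaussian.

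The convergence $H_{G_n}(f,g)\to H_G(f,g)$ I would read off the defining formula \eqref{H}: after the geometric decay of $\G_\gl$, the sum $\sum_{v\ne o}$ is dominated by vertices $v$ within bounded distance of the marked vertex $o$, and local convergence $G_n\to G$ with increasing girth means each such local contribution eventually coincides with the corresponding one on $G$; a geometric tail provides the dominating function needed to pass to the limit, first for the generators $p_\gl,p_\mu$ and then, by the bilinear extension from Lemma \ref{uec} and Proposition \ref{hplemma}, for all $f,g\in\anp$. The main obstacle is the interchange of the two limits in the cumulant step: the field $\{W_u\}$ is only exponentially—not finitely—dependent, so one must show the truncation error in the cumulants is small uniformly in $n$ and does not accumulate over the $|G_n|$ summands and the $k$ factors. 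Making this uniformity explicit, by combining the geometric Green's-function bounds with the connected-cluster count so that the order $\ge 3$ cumulants are $o(1)$ both before and after $R\to\infty$, is the technical heart of the argument.
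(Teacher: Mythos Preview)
Your overall architecture matches the paper's: prove a CLT for a finite-range version, then extend by approximation, and handle the covariance limit separately via local convergence (this last part is exactly Lemma~\ref{graphConv}). The differences lie in how the two steps are executed.

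For the finite-range CLT the paper works directly with polynomials $f$, so that the summands $W_u$ are \emph{genuinely} finite-range from the outset, and then applies a Berry--Esseen bound for locally dependent fields coming from Stein's method (the Chen--Shao lemma). Your cumulant argument would work just as well here: for polynomials the dependency graph is finite-range, the connected-cluster count is legitimate, and cumulants of order $\ge 3$ are $O(|G_n|^{1-k/2})$. So on this step the two routes are equally valid; Stein buys a rate, cumulants do not.

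The substantive divergence is in the extension from polynomials to all of $\anp$. The paper observes that the map $f\mapsto \mathrm{law}(T_{G_n}(f))$ is uniformly equicontinuous from $(\anp,\|\cdot\|_*)$ into the $2$-Wasserstein space, because $\ev|T_{G_n}(f)-T_{G_n}(g)|^2\le c_1^4\|f-g\|_*^2$ by Theorem~\ref{t2} and Lemma~\ref{uec}; density of polynomials then finishes the job. This uses only second-moment control. Your plan instead asks for uniform (in $n$) control of \emph{all} higher cumulants under truncation, which is precisely the difficulty you flag as the ``technical heart''. The trouble is real: the naive bound $|\mathrm{cum}_k(T_n(h))-\mathrm{cum}_k(T_n(h_R))|\le C_k\prod\|\cdot\|_\infty$ carries a factor $|G_n|^{k/2}$ and is useless, while sharper bounds require either establishing a quantitative mixing condition for the infinite-range field or a careful multilinear expansion over path lengths whose combinatorics (spanning-tree counts of order $d^{O(k\max j_i)}$) do not obviously sum against coefficients that decay only like $r^{-j}$ for some $r>1$. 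None of this is needed: your truncation $W_u\mapsto W_u^{(R)}$ is exactly the polynomial approximation $h\mapsto h_R$, and the uniform $L^2$ smallness of $T_n(h)-T_n(h_R)$ already suffices to transfer weak limits. In short, the obstacle you identify is not intrinsic to the problem but to the cumulant route; the paper's $2$-Wasserstein equicontinuity argument sidesteps it entirely.

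A small wording issue: you describe $W_u$ as built from the Green's function $\G_\lambda$. That is literally true only for $h=p_\lambda$; for general $h=\sum a_jz^j\in\anp$ the relevant localization is that the degree-$j$ contribution has range $O(j)$ with weight $|a_j|j$, and the ``geometric decay'' is that of the coefficient sequence $(a_j)$, not of $\G_\lambda$.
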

Finally, we consider the case when $G_n\to \BB{T}_d$, the
$d$-regular tree. Since $\mathbb T_d$ is bipartite, we
expect that the random functionals $m_{\eps}$ have a
symmetric limit. Therefore it is natural to consider $T_n$
for even functions of the form $f\circ s$ where $s(x)=x^2$
and $f\in \mathcal X$. Note that for any finite graph $G$
$$
T_{G}(f\circ s) = \tilde T_{G}(f),
$$
where  $\tilde T_G(f)$ is defined analogously to $T_G(f)$
but in terms of $\tilde \mu_{\eps}$, the empirical
eigenvalue measure of $(M+\eps B)^2$.

It is known (see \cite{LW}, Theorem
13.4) that $\tilde \mu_n$ corresponding to $G_n$ converges weakly to the measure with
density
\begin{equation}
\label{limESD} a_d(x):=\frac{2d^2}{\pi}\frac{\sqrt{x(\rho-x)}}{1-x}
\,{\bf 1}_{0<x<\rho},
\end{equation}
where $\rho=\rho_d:=4(d-1)/d^2$, the squared spectral
radius of the walk on the infinite $d$-regular tree
$\mathbb T_d$. Our next theorem gives an explicit
expression for the limiting covariance structure.

\begin{theorem}[Tree limits] \label{thm1}\label{main}
Assume that $G_n \to \mathbb T_d$, the $d$-regular tree.
Then jointly for all real $f\in \mathcal X$ we have the
convergence in distribution $ \tilde T_{G_n}(f)\to \tilde
T(f)$, where the $\tilde T(f)$ are jointly normal and have
mean 0. Moreover,
\begin{equation}\label{covariance}
\ev[\tilde T(f) \tilde T(g)]=\int_0^{\rho} \int_0^{\rho}
f'(x) g'(y) \,\gb_d(x,y) \,dx dy,
\end{equation}
where for $d>2$ the kernel $\gb_d$ is given by
\begin{equation}\label{kernel} \beta_d(x,y)=\frac{2d^4}{\pi^2}
\frac{(d-2)\,\kappa(x)\kappa(y) } {16(2d-3)(x-y)^2+(d-2)^2
A(d,x,y)},
\end{equation}
with the semicircle function
\begin{eqnarray*}
\kappa(x)&:=&2d\sqrt{x(\rho-x)} \qquad \mbox{and}\\
A(d,x,y)&:=&\rho
\kappa\big(\frac{x+y}{2}\big)^2+4(d+3)(x-y)^2+\rho^2
(d-2)^2.
\end{eqnarray*}
For $d=2$, the covariance is given by
$$
\frac{32}{\pi}\int_0^1 f'(x) g'(x) \sqrt{x(1-x)} dx,$$
which corresponds to
$\gb_2(x,y)=\frac{32}{\pi}\sqrt{x(1-x)} \ \gd_x(y).$
\end{theorem}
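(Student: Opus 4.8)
The plan is to reduce everything to Theorem \ref{normality} and then to an explicit evaluation of the bilinear form $H_{\mathbb{T}_d}$ on the tree. Since $s(x)=x^2$ maps the closed unit disk analytically into itself, $f\circ s\in\mathcal X$ whenever $f\in\mathcal X$, and $f\circ s$ is real when $f$ is. Applying Theorem \ref{normality} to the infinite graph $\mathbb{T}_d$ with the test functions $f\circ s$, the variables $\tilde T_{G_n}(f)=T_{G_n}(f\circ s)$ converge jointly to mean-zero normals $\tilde T(f)=T(f\circ s)$ with
$$\ev[\tilde T(f)\tilde T(g)]=H_{\mathbb{T}_d}(f\circ s,\,g\circ s).$$
This yields the normality and mean-zero statements at once, so the whole content of the theorem is the evaluation of the right-hand side. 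To reduce to the basis $p_\lambda$ I would use the identity $p_\lambda\circ s=\tfrac12(p_{\sqrt\lambda}+p_{-\sqrt\lambda})$, which by bilinearity writes $H_{\mathbb{T}_d}(p_\lambda\circ s,p_\mu\circ s)$ as an average of the four quantities $H_{\mathbb{T}_d}(p_{\pm\sqrt\lambda},p_{\pm\sqrt\mu})$. Because $\mathbb{T}_d$ is bipartite the defining expression \eqref{H} is even in each of $\lambda,\mu$ (concretely the tree Green's function satisfies $\theta_{-\lambda}=-\theta_\lambda$, so every factor in \eqref{H} is even in $\lambda$), all four terms coincide, and $H_{\mathbb{T}_d}(p_\lambda\circ s,p_\mu\circ s)=H_{\mathbb{T}_d}(p_{\sqrt\lambda},p_{\sqrt\mu})$.

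Next I would compute $H_{\mathbb{T}_d}(p_\lambda,p_\mu)$ in closed form. On $\mathbb{T}_d$ the Green's function $\G_\lambda(o,v)$ depends only on $n=\dist(o,v)$; solving the resolvent recursion $G(n)=\tfrac{\lambda}{d}[G(n-1)+(d-1)G(n+1)]$ gives $\G_\lambda(o,v)=G_\lambda(0)\,\theta_\lambda^{\,n}$, where $\theta_\lambda=\frac{d(1-\sqrt{1-\rho\lambda^2})}{2(d-1)\lambda}$ is the contracting root and $G_\lambda(0)=(1-\lambda\theta_\lambda)^{-1}$; note the decisive appearance of $\rho=4(d-1)/d^2$ through $\sqrt{1-\rho\lambda^2}$. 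Since $B_{ow}$ vanishes off the edges at $o$ and $\sum_{w\sim o}B_{ow}=0$ by \eqref{zerosum}, the vector $(B\G_\lambda)_{ov}$ collapses to the single term $B_{ow}\,G_\lambda(0)(1-\theta_\lambda^2)\theta_\lambda^{\,n-1}$, where $w$ is the neighbor of $o$ on the geodesic to $v$. Using $\ev B_{ow}^2=1$, independence of distinct rows of $B$, and the fact that the sphere of radius $n$ has $d(d-1)^{n-1}$ vertices, the sum in \eqref{H} becomes a geometric series summing to $d\,C_\lambda^2C_\mu^2/(1-(d-1)\theta_\lambda^2\theta_\mu^2)$ with $C_\lambda=G_\lambda(0)(1-\theta_\lambda^2)$; applying $\tfrac{\lambda\mu}{2}\partial^2_{\lambda,\mu}[\lambda^2\mu^2\,\cdot\,]$ then yields an explicit rational-algebraic function of $\theta_\lambda,\theta_\mu$.

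Finally I would pass from the resolvent data $H_{\mathbb{T}_d}(p_{\sqrt\lambda},p_{\sqrt\mu})$ to the kernel $\gb_d$. Writing each test function through the Cauchy representation $h(u)=\frac{1}{2\pi i}\oint \frac{h(\zeta)}{\zeta}p_{1/\zeta}(u)\,d\zeta$ and using bilinearity turns $\ev[\tilde T(f)\tilde T(g)]$ into a double contour integral of $H_{\mathbb{T}_d}(p_{1/\zeta},p_{1/\eta})$ against $f,g$; collapsing the contours onto the spectral interval $[0,\rho]$, where $\theta$ acquires the discontinuity $\sqrt{x(\rho-x)}$ (the factor $\kappa$), and integrating by parts to move the derivatives onto $f,g$ produces the stated form $\int_0^\rho\int_0^\rho f'(x)g'(y)\gb_d(x,y)\,dx\,dy$. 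Reading off the jump of the explicit expression across the cut gives $\gb_d$, with the combinations $(x-y)^2$, $\kappa(\frac{x+y}{2})$ and the constant $\rho^2(d-2)^2$ in $A(d,x,y)$ arising from the symmetric and antisymmetric parts of $\theta$ at the two edges of the cut. The main obstacle is exactly this last step: carrying the two-variable discontinuity calculation through to the compact closed form \eqref{kernel} is where all the work lies.

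For $d=2$ (the case $\mathbb{T}_2=\Z$), the numerator of \eqref{kernel} carries the vanishing factor $(d-2)$ while the denominator degenerates to $16(x-y)^2+(d-2)^2\rho\kappa(\tfrac{x+y}2)^2+\cdots$; after rescaling, $\gb_d(x,y)$ is, up to the prefactor $\tfrac{32}{\pi}\sqrt{x(1-x)}$, a Poisson kernel in $x-y$ of width of order $(d-2)$, so it converges to $\tfrac{32}{\pi}\sqrt{x(1-x)}\,\gd_x(y)$ as $d\downarrow 2$. This concentration onto the diagonal is precisely the localization of the noise, and I would confirm it either by this approximate-identity limit or by repeating the Green's-function computation directly at $d=2$.
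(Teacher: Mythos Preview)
Your approach is essentially the paper's own: reduce to Theorem \ref{normality}, compute $H_{\mathbb{T}_d}(p_\lambda,p_\mu)$ via the tree Green's function (your $\theta_\lambda$ is the paper's $a_\lambda$, your $C_\lambda$ is $b_\lambda(1-a_\lambda^2)$, and your geometric sum is exactly \eqref{aboveratio}), and then recover $\gb_d$ by collapsing a double contour onto the cut $[0,\rho]$; the $d=2$ case is obtained in both by the Poisson-kernel limit $d\downarrow 2$.

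The one substantive difference is tactical but worth noting. You propose to apply $\tfrac{\lambda\mu}{2}\partial^2_{\lambda,\mu}[\lambda^2\mu^2\,\cdot\,]$ to the geometric sum and only afterwards extract the kernel, which forces a later integration by parts to move derivatives back onto $f,g$. The paper avoids both the differentiation and the integration by parts: since $H_G(p_\lambda,p_\mu)$ in \eqref{H} and the target $\int f'(x)g'(y)\,d\gb$ evaluated at $f=p_{\lambda^2},\ g=p_{\mu^2}$ are \emph{both} of the form $\tfrac14\lambda\mu\,\partial^2_{\lambda,\mu}[\,\cdot\,]$, it suffices to match the inner expressions, i.e.\ the undifferentiated Stieltjes relation \eqref{bRelation}. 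The paper then substitutes $s=\sqrt{1-\rho\lambda^2}$, $t=\sqrt{1-\rho\mu^2}$ to reduce the geometric sum to the compact rational form \eqref{st}, changes to $u=\lambda^{-2}$, $v=\mu^{-2}$, and applies the double Cauchy formula to $\hat\gb(u,v)/(uv)$; the jump $\tilde\gb=\sum_{\sigma,\tau=\pm1}\sigma\tau\,\hat\gb[\sigma s,\tau t]$ across the cut then yields $\gb_d$ directly. Your route would reach the same destination, but the pre-derivative matching is what makes the ``main obstacle'' you flag tractable in practice.
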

The kernel function $\beta_d$ becomes singular as
$d\downarrow 2$. This can be seen in the figure on the
first page showing graphs $\beta_d:[0, \rho]^2\to\R$ for
$d=4,3,2.1$ from above. In words, in the $d=2$ case the
centered and scaled empirical measure converges to the
so-called $H_1$-noise with a certain density. In contrast
with the $H_{1/2}$-noise that arises in the limit of the
Gaussian Orthogonal Ensemble and Haar unitary random matrix
models (among others; see, for example \cite{DE01},
\cite{ZA} and the references therein), this noise is local,
as the $\delta$-function in the covariance formula shows.
$H_1$ noise appears typically for complex eigenvalues, see
\cite{rv}. Formula \eqref{covariance} is written in a form
to show how the kernel becomes singular (localized) as
$d\downarrow 2$. Recall that $\pi^{-1}\gep/(x^2+\gep^2)$
converges as $\gep\to 0$ to the delta function at $x=0$.

The presence of $f'$, $g'$ in the formula for the
covariance is a priori expected. When the matrix is
randomly perturbed, the eigenvalues move in random
directions. For every eigenvalue $\lambda$, the function
$f(\lambda)$ changes by a small amount proportional to $f'$
times the change in $\lambda$. However, the picture here is
more complicated because the perturbation is of order
$\eps^2$. The kernel $\beta$ reflects the correlation
between the random change in eigenvalues as well as their
density.

Theorem \ref{main} motivates the following question:

\begin{question}
Which natural sequences of graphs give rise to localized noise?
\end{question}

Our strategy is as follows. To prove normality, we use a
central limit theorem for dependent variables based on
Stein's method, rather than the usual method of moment
computations. For computing the covariance, we consider
traces, and so we will have to count certain paths. We use
the Green function to jointly treat paths of different
lengths and to avoid complicated computations with
orthogonal polynomials that normally arise in this context.
Theorems \ref{t2}, \ref{normality} and \ref{main}  are
proved in Sections \ref{s.covariance}, \ref{s.normality}
and \ref{s.trees} respectively.


The finite graphs that we study arise naturally. The
simplest way to pick an interesting transitive graph is to
consider a Cayley graph of a sufficiently complicated
finite group. As it is discussed in \cite{GH}, such Cayley
graphs will typically have girth tending to infinity; for
example, random Cayley graphs of simple groups of
increasing order will have this property. Of course, for
$d=2$, the only examples are graphs consisting of cycles.

Random perturbations of eigenvalues have been extensively
studied in the literature. Reference \cite{VP} studies
properties of the eigenvalues of a random perturbation of a
fixed matrix without the restriction for stochasticity or
positivity of the matrices involved (see Sections 1.3 and
2.2). Another possible approach to our problem is via
perturbation expansions for eigenvalues (see for example
\cite{DE} Sections 6.3, 6.4), but this requires more
control of the eigenvectors.

\section{The $\eps\to 0$ limit} \label{model}

We first show that for finite graphs $G$ the limit
\eqref{eepsto0} defining $T_G(f)$ exists and identify it.
We will rely on the Assumptions about the perturbation
matrix $B$, in particular, \eqref{zerosum}. In fact,
throughout this paper, we only need a simple consequence of
\eqref{zerosum}. Namely, for all $k\ge 1$ and $v\in G$, we
have
\begin{equation}
(BM^k)_{v, v}=0.  \label{zeroentry}
\end{equation}
Indeed, for any $g\in \stab(v)$ this equals
$$\sum_{w\in G} B_{v,w}M^k_{w,v}=\sum_{w\in G} B_{gv,gw}
M^k_{gw,gv} = \sum_{w\in G} B_{v,gw}
M^k_{w,v}.
$$
Averaging each term over all $g\in \stab(v)$ and using
\eqref{zerosum} we get \eqref{zeroentry}.

\begin{proposition}[The limit as $\gep\to 0$]\label{epsto0}
 For any $f=\sum a_j z^j\in \anp$, the limit $T_G(f)$ exists, and
 \begin{equation}\label{tgexpansion}
 T_G(f)= \frac{1}{2}\Tr\big(
\partial_{\eps\eps} f(M+\eps B)\vert_{\eps=0}\big)=
\sum_{j=0}^\infty a_jT_G(z^j).
\end{equation}
Moreover, for integers $j\ge 0$ we have
$$
T_G(z^j)=|G|^{-1/2}\, \frac{j}{2} \sum_{k_1+k_2+2=j} \Tr (B
M^{k_1}B M^{k_2}).
$$
\end{proposition}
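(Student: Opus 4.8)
The plan is to treat $T_G(f)$ as a second-order Taylor coefficient and to interchange the trace with the $\eps$-derivatives.

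First I would rewrite the integral against the scaled measure as a trace. Since $G$ is vertex-transitive and $d$-regular, $M$ is symmetric with eigenvalues in $[-1,1]$, so for $f\in\mathcal{X}$ one has $\int f\,d\mu_\eps = |G|^{-1}\Tr f(M+\eps B)$, where $f(M+\eps B)$ is defined by the holomorphic functional calculus; this is legitimate because the eigenvalues of $M+\eps B$ depend continuously on $\eps$ and hence, for $\eps$ small, stay inside the neighborhood of the closed unit disk on which $f$ is analytic. Writing $\phi(\eps):=\Tr f(M+\eps B)$, the resolvent representation $\phi(\eps)=\frac{1}{2\pi i}\oint_\Gamma f(z)\Tr\big((z-M-\eps B)^{-1}\big)\,dz$ shows that $\phi$ is analytic near $\eps=0$, so it admits a Taylor expansion $\phi(\eps)=\phi(0)+\phi'(0)\eps+\tfrac12\phi''(0)\eps^2+O(\eps^3)$. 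Consequently
$$\int f\,dm_\eps = |G|^{-1/2}\,\frac{\phi(\eps)-\phi(0)}{\eps^2} = |G|^{-1/2}\Big(\frac{\phi'(0)}{\eps}+\tfrac12\phi''(0)+O(\eps)\Big).$$

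The crux is that the first-order term vanishes. By the standard trace-derivative identity $\phi'(0)=\Tr\big(f'(M)B\big)$; expanding $f'(M)=\sum_{k\ge 0}(k+1)a_{k+1}M^k$ and using cyclicity gives $\phi'(0)=\sum_{k\ge 0}(k+1)a_{k+1}\sum_{v}(BM^k)_{vv}$. Each diagonal entry $(BM^k)_{vv}$ is zero: for $k\ge1$ this is precisely \eqref{zeroentry}, the consequence of the zero-sum condition \eqref{zerosum}, while for $k=0$ it follows from $M_{vv}=0$ (no self-loops) together with \eqref{boundedPert}. Hence $\phi'(0)=0$, the $1/\eps$ term disappears, the limit \eqref{eepsto0} exists, and $T_G(f)=|G|^{-1/2}\tfrac12\phi''(0)=|G|^{-1/2}\tfrac12\Tr\big(\partial_{\eps\eps}f(M+\eps B)|_{\eps=0}\big)$. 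To pass to $\sum_j a_j T_G(z^j)$ I would differentiate the power series term by term: since $f\in\mathcal{X}$ forces $|a_j|\le Cr^{-j}$ for some $r>1$, while $|T_G(z^j)|$ grows only polynomially in $j$ (it is a sum of $O(j^2)$ traces of bounded norm), the series converges absolutely and the interchange is valid.

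It then remains to evaluate $T_G(z^j)$. Expanding $(M+\eps B)^j=\sum_{S\subseteq\{1,\dots,j\}}\eps^{|S|}P_S$, where $P_S$ places $B$ in the positions of $S$ and $M$ elsewhere, the $\eps^2$-coefficient is $\sum_{1\le p<q\le j}P_{\{p,q\}}$, and this equals $\tfrac12\partial_{\eps\eps}(M+\eps B)^j|_{\eps=0}$. Taking the trace of $P_{\{p,q\}}=M^{p-1}BM^{q-p-1}BM^{j-q}$ and using cyclicity yields $\Tr(BM^{k_1}BM^{k_2})$ with $k_1=q-p-1$ and $k_2=j-2-k_1$. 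For fixed $k_1$ there are $j-1-k_1$ admissible pairs $(p,q)$, so the trace sum equals $\sum_{k=0}^{j-2}(j-1-k)\Tr(BM^kBM^{j-2-k})$; reindexing $k\mapsto j-2-k$ and using the symmetry $\Tr(BM^{k_1}BM^{k_2})=\Tr(BM^{k_2}BM^{k_1})$ averages the coefficient to the constant $j/2$, which gives the claimed formula.

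I expect the main obstacle to be conceptual rather than computational: establishing $\phi'(0)=0$ for every $f\in\mathcal{X}$, since this is precisely what forces the $\eps^2$ scaling and is where the symmetry assumption \eqref{zerosum} is indispensable; without it the difference quotient would blow up like $1/\eps$ and no limit would exist. The remaining points — the uniformity of the $O(\eps^3)$ remainder through the functional calculus, and the term-by-term differentiation of the series — are routine given that $f$ is analytic on a neighborhood of the closed unit disk.
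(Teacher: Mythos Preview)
Your proof is correct and follows the paper's approach: establish analyticity of $\phi(\eps)=\Tr f(M+\eps B)$, verify $\phi'(0)=0$ via \eqref{zeroentry}, and read off the $\eps^2$-coefficient. The only difference is cosmetic: for $T_G(z^j)$ the paper applies cyclicity once more to write $\partial_\eps\Tr\big((M+\eps B)^j\big)=j\,\Tr\big(B(M+\eps B)^{j-1}\big)$ before differentiating again, which sidesteps your subset expansion and symmetrization of the coefficient $(j-1-k_1)$ but yields the same formula.
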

\begin{proof}
We use the fact (see, for example, Theorem 6.2.8 in
\cite{HJ}) that for $f$ an analytic function with radius of
convergence $r>0$, and for  $A$ in the set $\mathcal M_r$
of matrices of some fixed dimension and with spectral
radius less than $r$, the power series $f(A)$ is absolutely
convergent and is analytic as a function of (the entries
of) $A$. Thus $\Tr f(M+\eps B)$ is an analytic function of
all of its (at most) $2|G|^2+1$ variables near $M,B$, and
$\eps=0$. We are free to rearrange its absolutely
convergent multiple power series expansion in any way we
like. So the first derivative  with respect to $\eps$ at
$0$ is the coefficient of the $\eps$ terms; this is a
multiple of
$$ \sum_{k_1+k_2=j-1} \text{Tr} (M^{k_1} B M^{k_2})=\sum_{k_1+k_2= j-1}
\text{Tr}
(B M^{k_1+k_2})  $$ and each of the summands is zero by
\eqref{zeroentry}. We have
$$
|G|^{1/2}\int f\, d m_{\gep} = \eps^{-2}(\Tr f(M+\eps
B)-\Tr f(M)) \to \frac{1}{2}\,\partial_{\eps\eps} \Tr
f(M+\eps B)|_{\eps=0}
$$
when $\eps\to 0$, since the first derivative vanishes at
$\eps=0$. Now the right hand side equals the
coefficient of $\eps^2$ in the expansion. This
gives \eqref{tgexpansion}.

Since the trace of a product does not change when we cyclically
permute the factors, we see that
$$
\partial_{\eps} \Tr( (M+\eps B)^{j})=\Tr( \partial_{\eps} (M+\eps B)^{j})= j
\Tr (B (M+\eps B)^{j-1}).
$$
Taking another derivative, we get the second claim of the proposition.
\end{proof}

\section{Properties of the bilinear form $H_G$} \label{covexpression}

The goal of this section is to define the  bilinear form $H_G$ introduced in \eqref{H} and establish its continuity properties.

Recall that $\anp$ denotes the set of power series centered
at zero, and with radius of convergence more than 1. For a
power series $f$, let $[x^k]f(x)$ denote the coefficient of
$x^k$ in the expansion of $f$. Given a finite or infinite
graph $G$ (with $B$ and $M$) as in the introduction, we fix
a vertex $o$ of $G$, and re-define the bilinear form on
$\anp$ as
 \begin{align}\label{hg}
H_G(f,g):&= \frac{1}{2} \sum_{ij} i j \alpha_{ij}\,[x^i]f(x)[x^j]g(x) \\
\notag &=\frac{1}{2} \sum_{ij} \alpha_{ij}
\,[x^{i-1}]f'(x)[x^{j-1}]g'(x),
\end{align}
where
$$
\alpha_{ij}:=\ev \left[\sum_{v\in G\setminus
\{o\}}Y_i(G,o,v)Y_j(G,o,v)\right],
$$
and for any vertices $v, w$ of $G$, we define
\begin{equation}\label{Y}Y_j(G,v,w):=\sum_{j_1+j_2+2=j} (B M^{j_1})_{v,w} (B
M^{j_2})_{w,v}.
\end{equation}

The last sum is always finite and symmetric in $v,w$.  In
Proposition \ref{hplemma} we will show that this definition
agrees with \eqref{H} for the functions $p_\lambda$. Note
that the definition of $H_G$ does not depend on $o$ if $G$
is vertex-transitive. For any graph $G$, the bilinear form
$H_G(f,g)$ clearly makes sense for polynomials $f,g$. To go
beyond polynomials, we need to show that the infinite sum
in \eqref{hg} is well-defined. For $f=\sum a_kz^k \in
\anp$, set
$$\snorm{f}=\sum_{k=1 }^{+\infty} |a_k| k^2,$$
which defines a norm on $\mathcal X$. (To be precise, it is a norm on $\mathcal X$ modulo the constants;  however, we note that for the purposes of this paper the constant terms for functions in $\mathcal X$ do not matter, so we may thing of  $\mathcal X$ itself as the space of functions modulo the constants.)
Note that on $\anp$ this norm is finite.
\begin{remark}\label{domination}
On the subspace of $\anp$ consisting of power series with
radius of convergence strictly larger that a fixed $r>1$,
the norm $\|\cdot\|_*$ is dominated by the supremum norm on
$D_r:=\{z:|z|\le r\}$. Indeed, for $f(z)=\sum_{k=0}a_k z^k$
in that subspace, Cauchy's inequalities give $|a_k|\le
r^{-k} \|f\|_{D_r}^{\infty}$. So that
$$\|f\|_*\le \frac{r(1+r)}{(r-1)^3}\|f\|_{D_r}^\infty.$$
\end{remark}
As a direct consequence, we get
\begin{lemma}\label{dense}
Polynomials are  $\|\cdot\|_*$ - dense in $\anp$.
\end{lemma}

We are now ready to show that $H_G$ is well-defined and
continuous.

\begin{lemma} \label{uec}
 For $f,g\in \anp$ and any $G$,
the sum giving $H_G(f,g)$ is absolutely convergent, and
  $$ |H_G(f,g)| \le c_1^4 \snorm{f} \snorm {g}. $$
\end{lemma}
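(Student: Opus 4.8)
The plan is to bound the form $H_G(f,g)$ term by term in its defining series \eqref{hg}, reducing the probabilistic quantity $\alpha_{ij}$ to a purely deterministic and combinatorial estimate, and then to observe that the resulting growth in $i,j$ is exactly absorbed by the $k^2$-weights built into $\snorm{\cdot}$.

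The engine is a single almost-sure pointwise estimate. Writing $(BM^k)_{u,v}=\sum_w B_{u,w}(M^k)_{w,v}$, the hypothesis $|B_{u,w}|\le c_1 M_{u,w}$ from \eqref{boundedPert}, together with the nonnegativity of $M$, collapses the sum to give
\[
|(BM^k)_{u,v}|\le c_1\,(M^{k+1})_{u,v}.
\]
Applying this to both factors in the definition \eqref{Y} of $Y_i(G,o,v)$, I would obtain the deterministic domination
\[
|Y_i(G,o,v)|\le c_1^2\sum_{a+b=i,\ a,b\ge 1}(M^a)_{o,v}(M^b)_{v,o}.
\]
The point of passing to a deterministic bound is that no second-moment or covariance computation is then needed: after inserting it into $|\alpha_{ij}|\le \ev[\sum_{v\ne o}|Y_i(G,o,v)|\,|Y_j(G,o,v)|]$, the integrand is nonrandom and the expectation is trivial, leaving
\[
|\alpha_{ij}|\le c_1^4\sum_{a+b=i}\ \sum_{a'+b'=j}\ \sum_{v}(M^a)_{o,v}(M^b)_{v,o}(M^{a'})_{o,v}(M^{b'})_{v,o}.
\]

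Next I would use only that $M$ is stochastic, so every entry of a power of $M$ lies in $[0,1]$. Bounding the two primed factors by $1$ and using $\sum_v (M^a)_{o,v}(M^b)_{v,o}=(M^{a+b})_{o,o}=(M^i)_{o,o}\le 1$ shows each inner sum over $v$ is at most $1$; since there are $(i-1)(j-1)$ admissible pairs of index decompositions, this yields $|\alpha_{ij}|\le c_1^4(i-1)(j-1)$. Feeding this into \eqref{hg} and using $i(i-1)\le i^2$, $j(j-1)\le j^2$ gives
\[
|H_G(f,g)|\le \frac{c_1^4}{2}\sum_{i,j}i^2 j^2\,\big|[x^i]f(x)\big|\,\big|[x^j]g(x)\big|=\frac{c_1^4}{2}\,\snorm{f}\,\snorm{g},
\]
which is finite on $\anp$, establishing both absolute convergence and the claimed bound (in fact with a spare factor $\tfrac12$).

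The only real subtlety, and what dictates the shape of the argument, is the bookkeeping that the number of summands in $Y_i$ grows linearly in $i$, so that $|\alpha_{ij}|$ grows like $(i-1)(j-1)$; this quadratic growth is precisely matched by the two extra powers of $k$ in the norm $\snorm{f}=\sum_k |a_k|k^2$. A norm weighted by only the first power of $k$ would fail to close the estimate, which explains the definition of $\snorm{\cdot}$. Everything else rests on the deterministic collapse $|B|\le c_1 M$ and the stochasticity of $M$, and both extend verbatim to infinite $G$, since all the sums involved are over nonnegative terms and hence interchange freely.
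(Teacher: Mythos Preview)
Your proof is correct and follows essentially the same route as the paper: both use the deterministic pointwise domination $|(BM^k)_{u,v}|\le c_1(M^{k+1})_{u,v}$ from \eqref{boundedPert}, then exploit stochasticity of $M$ to collapse the $v$-sum via Chapman--Kolmogorov, obtaining $|\alpha_{ij}|\le c_1^4 ij$ and hence the stated bound (with the same spare factor $\tfrac12$). Your presentation is in fact a bit cleaner than the paper's, which bounds $\sum_w|Y_j|$ first and then invokes the per-term bound, arriving at the same estimate.
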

\begin{proof}
Note that since $|B_{vw}|\le c_1 M_{vw}$, we have
\begin{equation}\label{yjbound}
\sum_{w\in G}|Y_j(G,v,w)|\le c_1^2 \sum_{w\in
G}\sum_{j_1+j_2+2=j} M^{j_1+1}_{v,w} M^{j_2+1}_{w,v}=c_1^2
(j-1) (M^j)_{v,v}\le c_1^2 j (M^j)_{v,v}.
\end{equation}
In particular, the same bound holds for each term
$|Y_j(G,v,w)|$. Therefore
$$
\frac {1}{c_1^4ij} \sum_{v\in G\setminus
\{o\}}Y_i(G,o,v)Y_j(G,o,v) \le \sum_{v\in G\setminus
\{o\}}M^{i}_{v,v}M^{j}_{v,v} \le \sum_{v\in G\setminus
\{o\},w\in G}M^{i}_{v,w}M^{j}_{w,v} = M^{i+j}_{v,v}\le1.
$$
The claim now follows by summing over all $i,j$.
\end{proof}
\begin{proposition}\label{hplemma}  $H_G$ satisfies \eqref{H}.  Moreover,
\eqref{H} uniquely defines $H_G$ as a
$\|\cdot\|_*$-continuous bilinear form.
\end{proposition}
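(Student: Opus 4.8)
\emph{The plan is} to match the two descriptions of $H_G$ on the kernel functions $p_\lambda$, and then to promote this to a statement about the whole bilinear form using continuity and density. The unifying device is the expansion $\G_\lambda=\sum_{k\ge0}\lambda^k M^k$, valid for $|\lambda|<1$ since the transition matrix $M$ has norm at most one; it converts every matrix-entry product into a power series in $\lambda$ and $\mu$. First I would record the generating function attached to \eqref{Y}: grouping the factors with $j_1+j_2+2=j$ gives
$$
\sum_{j\ge0}\lambda^j\,Y_j(G,o,v)=\lambda^2\,(B\G_\lambda)_{o,v}\,(B\G_\lambda)_{v,o}.
$$
The key structural observation is that $(B\G_\lambda)_{o,v}=\sum_w B_{o,w}(\G_\lambda)_{w,v}$ involves only row $o$ of $B$, whereas $(B\G_\lambda)_{v,o}$ involves only row $v$; by the assumption that distinct rows of $B$ are independent, these two families decouple in expectation whenever $v\neq o$.

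Multiplying the $\lambda$- and $\mu$-generating functions, taking expectations, and summing over $v\neq o$ then yields
$$
\sum_{ij}\alpha_{ij}\,\lambda^i\mu^j=\lambda^2\mu^2\sum_{v\neq o}\ev\big[(B\G_\lambda)_{o,v}(B\G_\mu)_{o,v}\big]\,\ev\big[(B\G_\lambda)_{v,o}(B\G_\mu)_{v,o}\big],
$$
which is precisely the argument of $\partial_{\lambda,\mu}^2$ in \eqref{H}. Since $[x^i]p_\lambda=\lambda^i$ and $\lambda\mu\,\partial_{\lambda,\mu}^2(\lambda^i\mu^j)=ij\,\lambda^i\mu^j$, applying the operator $\tfrac12\lambda\mu\,\partial_{\lambda,\mu}^2$ to the last display turns its left-hand side into $H_G(p_\lambda,p_\mu)$ as defined by \eqref{hg}, and its right-hand side into \eqref{H}. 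This establishes the first assertion. All rearrangements of the double series are licensed by the absolute convergence proved in Lemma \ref{uec}, together with the fact that $p_\lambda,p_\mu\in\anp$ when $|\lambda|,|\mu|<1$.

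For the second assertion, recall that $H_G$ is $\|\cdot\|_*$-continuous by Lemma \ref{uec}. Let $\tilde H$ be any $\|\cdot\|_*$-continuous bilinear form satisfying \eqref{H}. Because the partial sums of $p_\lambda=\sum_i\lambda^i x^i$ converge in $\|\cdot\|_*$ for $|\lambda|<1$ (the tail has norm $\sum_{i>N}|\lambda|^i i^2\to0$), continuity gives
$$
\tilde H(p_\lambda,p_\mu)=\sum_{ij}\lambda^i\mu^j\,\tilde H(x^i,x^j).
$$
Thus the values prescribed by \eqref{H} are the coefficients of a convergent double power series in $(\lambda,\mu)$, so they determine every number $\tilde H(x^i,x^j)$. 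By bilinearity $\tilde H$ is then fixed on all polynomials, and since polynomials are $\|\cdot\|_*$-dense by Lemma \ref{dense} and $\tilde H$ is continuous, $\tilde H$ is determined on all of $\anp$. Hence $H_G$ is the unique such extension.

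The main obstacle is the decoupling step: justifying that the single expectation $\ev[Y_i(G,o,v)Y_j(G,o,v)]$ factors into the two independent expectations appearing in \eqref{H}. This is exactly where the independence of distinct rows of $B$ enters, and it also explains the exclusion of the diagonal term $v=o$, for which the two rows coincide and no factorization holds. The remaining technical care goes into the interchange of the infinite sums over $i,j$ and their resummation into Green's functions, which must be controlled uniformly; this is supplied by the absolute bound of Lemma \ref{uec}.
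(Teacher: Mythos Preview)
Your argument for the first assertion is essentially identical to the paper's: expand $\alpha_{ij}$ via the generating function for $Y_j$, factor the expectation using the independence of distinct rows of $B$, and then apply $\tfrac12\lambda\mu\,\partial^2_{\lambda,\mu}$. The justification of the rearrangements via Lemma \ref{uec} is the same.

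For the uniqueness claim you take a genuinely different route. The paper argues that the linear span of $\{p_\lambda:|\lambda|<1\}$ is $\|\cdot\|_*$-dense in $\anp$ (this is Lemma \ref{dense2}, proved via the Cauchy integral formula and Riemann sums), so any continuous bilinear form is determined by its values on these functions. You instead observe that for any continuous bilinear form $\tilde H$, the assignment $(\lambda,\mu)\mapsto\tilde H(p_\lambda,p_\mu)$ is a convergent double power series with coefficients $\tilde H(x^i,x^j)$, so the data in \eqref{H} already pin down $\tilde H$ on monomials; density of polynomials (Lemma \ref{dense}) then finishes the job. Your approach is more elementary in that it bypasses Lemma \ref{dense2} entirely, though that lemma is used again later in the paper (Section 6), so the paper does not save work overall. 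Both arguments are correct.
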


\begin{proof}
The absolute convergence of the series defining $H_G$,
implies
$$
H_G(p_\lambda,p_\mu)=\frac{1}{2}\sum_{i, j\ge 1} i j
\,\alpha_{i j}\lambda^{i}\mu^{j}=\frac{1}{2}\lambda\mu\,
\partial_\lambda\partial_\mu \sum_{i, j\ge 1} \alpha_{ij}
\lambda^{i}\mu^{j}.
$$
Using the definition of $\alpha_{i j}$, we write the sum
above as
\begin{eqnarray*}
\lefteqn{ \sum_{\substack{v\in G\setminus\{o\}
\\k,\ell,k',\ell'\ge 0}}\lambda^{k+\ell+2}\mu^{k'+\ell'+2}
 \ev (B M^{k})_{o v} (B M^{\ell})_{v o} (B M^{k'})_{ov }(B M^{\ell\,'})_{v
 o}}&&\\
 &=&\lambda^2\mu^2\sum_{v\in G\setminus\{o\}}\ev
(B\G_\lambda)_{ov}(B\G_\lambda)_{vo}(B\G_\mu)_{ov}(B\G_\mu)_{vo}\\
 &=&\lambda^2\mu^2\sum_{v\in G\setminus\{o\}}\ev\left[
(B\G_\lambda)_{ov}(B\G_\mu)_{ov}\right]\ev\left[(B\G_\lambda)_{vo}(B\G_\mu)_{vo}\right].
\end{eqnarray*}
This proves the first part. For the second, it suffices to
show that the linear span of
$\{p_\lambda\,:\,|\lambda|<1\}$ is $\|\cdot \|_*$-dense in
$\mathcal X$; this will be done in the next lemma.
\end{proof}

\begin{lemma}\label{dense2}
The vector space generated by $\{p_\lambda\,:\,|\lambda|<1\}$ is
$\|\cdot\|_*$ - dense in $\anp$.
\end{lemma}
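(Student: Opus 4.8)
The plan is to reduce to approximating monomials and then build them explicitly from the family $\{p_\lambda\}$ by finite differencing in the parameter $\lambda$. By Lemma~\ref{dense} the polynomials are $\|\cdot\|_*$-dense in $\anp$, so it suffices to show that every monomial $z^n$ (with $n\ge 1$; constant terms do not matter) lies in the $\|\cdot\|_*$-closure of $\mathrm{span}\{p_\lambda:|\lambda|<1\}$. Once this is known, that closure contains every polynomial, hence the dense set of polynomials; being closed, it must equal all of $\anp$, and the lemma follows.

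The starting point is the identity $p_\lambda(z)=\sum_{k\ge 0}\lambda^k z^k$, which gives $\partial_\lambda^n p_\lambda|_{\lambda=0}=n!\,z^n$; thus $z^n$ is, formally, an $n$-th $\lambda$-derivative of $p_\lambda$ at $0$, and a derivative is a limit of finite differences, each of which is a genuine linear combination of the $p_\lambda$. Concretely, for small $h>0$ I set
\[
q_{n,h}:=\frac{1}{h^n\,n!}\sum_{j=0}^n(-1)^{n-j}\binom{n}{j}\,p_{jh},
\]
a finite linear combination of the functions $p_{jh}$ with $|jh|=jh<1$ once $h<1/n$. Reading off the coefficient of $z^k$ and using the standard identity $\sum_{j=0}^n(-1)^{n-j}\binom{n}{j}j^k=n!\,S(k,n)$, where $S(k,n)$ is the Stirling number of the second kind (so $S(k,n)=0$ for $k<n$ and $S(n,n)=1$), I obtain
\[
q_{n,h}=z^n+\sum_{k>n}h^{k-n}S(k,n)\,z^k.
\]

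It then remains to estimate the error in the $\|\cdot\|_*$-norm. Using the crude bound $S(k,n)\le n^k/n!$ (the number of surjections onto an $n$-set is at most $n^k$), I get
\[
\snorm{q_{n,h}-z^n}=\sum_{k>n}h^{k-n}S(k,n)\,k^2\le\frac{n^n}{n!}\sum_{m\ge 1}(nh)^m(n+m)^2,
\]
which, for fixed $n$, converges once $nh<1$ and tends to $0$ as $h\to 0$. Hence $z^n$ is a $\|\cdot\|_*$-limit of elements of $\mathrm{span}\{p_\lambda\}$, completing the argument.

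I expect the only real work to be the error estimate in the last step: one must control the $k^2$-weighted tail of the finite-difference expansion, which forces a quantitative bound on the combinatorial coefficients $S(k,n)$ (equivalently, the complete homogeneous symmetric polynomials $h_{k-n}(1,\dots,n)$). Any bound that is at most exponential in $k$ for fixed $n$ suffices, so this is routine; the reduction via Lemma~\ref{dense} and the computation of $q_{n,h}$ are purely formal.
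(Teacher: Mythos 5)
Your proof is correct, but it takes a genuinely different route from the paper's. The paper also reduces to polynomials via Lemma~\ref{dense}, but then approximates a whole polynomial $P$ at once by writing $P(z)=\frac{1}{2\pi i}\int_{C_{r_1}}\frac{P(\zeta)}{\zeta}\,p_{1/\zeta}(z)\,d\zeta$ (Cauchy's formula on a circle of radius $r_1>r>1$), taking Riemann sums of this integral to get finite linear combinations of the $p_{1/\zeta_j}$ with $|1/\zeta_j|<1$, arguing uniform convergence on $D_r$ by equicontinuity, and finally transferring from the supremum norm on $D_r$ to $\snorm{\cdot}$ via Remark~\ref{domination}. You instead realize each monomial $z^n=\frac{1}{n!}\partial_\lambda^n p_\lambda\vert_{\lambda=0}$ by forward differences at the nodes $0,h,\dots,nh$ and estimate the error coefficientwise, directly in the $\snorm{\cdot}$ norm, using the surjection formula for Stirling numbers and the bound $S(k,n)\le n^k/n!$; all steps check out (the coefficient of $z^k$ in $q_{n,h}$ is indeed $h^{k-n}S(k,n)$, and the tail $\sum_{k>n}h^{k-n}S(k,n)k^2$ is $O(h)$ for fixed $n$ once $nh$ is bounded away from $1$). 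The paper's argument is softer and handles arbitrary polynomials in one stroke, at the cost of invoking the norm-domination remark and an equicontinuity argument; yours is more elementary and fully quantitative, and it never leaves the $\snorm{\cdot}$ topology. Either proof serves the role the lemma plays in Proposition~\ref{hplemma}.
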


\begin{proof}
Because of the density of polynomials in $\anp$ and Remark
\ref{domination}, it suffices to prove that any polynomial
$P$ can be approximated in the supremum norm on a disk
$D_r$ with $r>1$ by elements of the linear span $\{p_\lambda\,:\,|\lambda|<1\}$.

Pick $1<r<r_1$, and
let $C_{r_1}:\{z:|z|=r_1\}$. Then
$$ P(z)=\frac{1}{2\pi i} \int_{C_{r_1}} \frac{P(\zeta)}{\zeta-z}
d\zeta=\frac{1}{2\pi i} \int_{C_{r_1}}
\frac{P(\zeta)}{\zeta} p_{1/\zeta} (z) d\zeta.$$
Call $R_n(z)$ the Riemann sum corresponding to an
equipartition of the circle with $n$ pieces. The sequence
of functions $R_n$ is equicontinuous on $\{z:|z|\le r\}$
and it converges pointwise to $P(z)$, thus the convergence
is uniform on that set. So that for a given $\gd>0$, there
is a finite linear combination
$$ A_\gd(z):=c_1 p_{1/\zeta_1}(z)+c_2 p_{1/\zeta_2}(z)+\ldots+c_k
p_{1/\zeta_k}(z)$$
such that
\begin{equation} \label{close}
|f(z)-A_\gd(z)|\le \gd
\end{equation}
for all $z\in\mathbb{C}$ with $|z|\le r$. This completes the proof of the lemma.
\end{proof}

Finally, we check that $H_G(f,g)$ is continuous in $G$ as
well. Recall the definition of local convergence of graphs
from the introduction. For the following lemma, we use, as
usual, the assumptions from the introduction for each $G_n$
and $G$.
\begin{lemma} \label{graphConv}
If $G_n\to G$ locally, then for $f,g\in \anp$ we have
$H_{G_n}(f,g)\to H_G(f,g)$.
\end{lemma}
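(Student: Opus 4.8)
The plan is to prove convergence $H_{G_n}(f,g)\to H_G(f,g)$ by first reducing to the case where $f$ and $g$ are fixed polynomials, and then showing that for polynomials the bilinear form is eventually \emph{constant} in $n$ thanks to the local convergence and increasing girth. The key observation is that the summand $\alpha_{ij}$ is built from the quantities $Y_i(G,o,v)Y_j(G,o,v)$, where $Y_j(G,o,v)$ depends only on entries of $B$ and powers $M^{j_1},M^{j_2}$ with $j_1,j_2\le j$. Hence $Y_i(G,o,v)Y_j(G,o,v)$ is determined entirely by the structure of $G$ (together with the law of $B$) within graph-distance at most $\max(i,j)$ of both $o$ and $v$. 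Since $M^k_{o,v}=0$ whenever $v$ lies at distance greater than $k$ from $o$, the only vertices $v$ that contribute to $\alpha_{ij}$ for $f,g$ of degree $\le N$ are those within distance $N$ of $o$, a bounded set.

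The main steps, in order, are as follows. First I would invoke Lemma \ref{uec}, which gives the uniform bound $|H_{G_n}(f,g)|\le c_1^4\snorm{f}\snorm{g}$ with the \emph{same} constant $c_1$ for all $n$ and for the limit $G$. Combined with Lemma \ref{dense} (density of polynomials in the $\snorm{\cdot}$-norm) and the bilinearity of $H_G$, this lets me reduce to the case of polynomial $f,g$: for general $f,g$ pick polynomial approximants $f_k\to f$, $g_k\to g$ in $\snorm{\cdot}$, and use a standard $3\varepsilon$-argument, since $|H_{G_n}(f,g)-H_{G_n}(f_k,g_k)|$ is controlled uniformly in $n$ by the Lemma \ref{uec} bound applied to the differences. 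Second, for fixed polynomials, say of degree at most $N$, I would argue that for all $n$ large enough the quantity $H_{G_n}(f,g)$ equals $H_G(f,g)$ exactly. By local convergence, for each fixed radius $r$ the $r$-neighborhood of the marked vertex $o$ in $G_n$ is eventually isomorphic (as a rooted graph, hence with matching $M$-entries) to that in $G$; taking $r=N$ ensures that every term $(BM^{j_1})_{o,v}(BM^{j_2})_{v,o}$ with $j_1+j_2+2\le N$, and therefore every summand contributing to $\alpha_{ij}$ with $i,j\le N$, is computed from an identical local configuration. Since the law of $B$ is determined by this local structure under the Assumptions, the expectations $\alpha_{ij}$ agree, and thus $H_{G_n}(f,g)=H_G(f,g)$ for all sufficiently large $n$.

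The step I expect to require the most care is making precise that the \emph{distribution} of $B$ on the relevant neighborhood of $G_n$ matches that on $G$, not merely the deterministic graph structure. Local graph convergence as stated guarantees only that the rooted $r$-balls are isomorphic, so I must argue that the Assumptions pin down the law of the entries $B_{u,v}$ as a function of the local geometry: the variance is normalized to $1$, the mean is zero, the support is constrained by $|B_{u,v}|\le c_1M_{u,v}$, the independence is across distinct $u$, and the symmetry relation \eqref{zerosum} is dictated by $\stab(u)$, which for a transitive graph is determined by the local isomorphism type. Strictly, the theorem statement presumes that the matching of neighborhoods carries the matching of the $B$-laws; I would state this compatibility as the precise meaning of $G_n\to G$ in the present context and then the convergence of $\alpha_{ij}$ follows because each is a finite sum of expectations of polynomials in finitely many matched entries. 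Everything else is bookkeeping: summing the finitely many matched terms $ij\,\alpha_{ij}[x^i]f[x^j]g$ gives the eventual equality $H_{G_n}(f,g)=H_G(f,g)$, which of course implies the claimed convergence.
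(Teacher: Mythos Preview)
Your proposal is correct and follows essentially the same route as the paper: show that for polynomials $H_{G_n}(f,g)$ is eventually \emph{equal} to $H_G(f,g)$ because it depends only on a bounded neighborhood of $o$, then pass to general $f,g\in\mathcal X$ using the uniform bound of Lemma~\ref{uec} and the $\snorm{\cdot}$-density of polynomials. The only cosmetic difference is that where you spell out a $3\varepsilon$-argument, the paper packages the same step by citing the equicontinuity Lemma~\ref{analysis}; and the subtlety you flag about the law of $B$ being determined by the local isomorphism type is indeed implicit in the paper's argument as well.
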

\begin{proof}
Note that as $n\to \infty$, the neighborhood of radius
$i+j$ of $o$ in $G_n$ stabilizes to look like the same
neighborhood in the limit graph $G$.

For polynomials $f, g$, and for all large $n$, we have $H_{G_n}(f,g)=H_G(f,g)$. This is because $H_{G_n}(f,g)$
only depends on a neighborhood of the root $o$ of radius
given by the maximal degree of $f$ and $g$.

Now we have that the sequence of functions $H_{G_n}(f,g)$
is equicontinuous on $\anp^2$ (by Lemma \ref{uec}) and
converges on a dense set. Thus, by Lemma \eqref{analysis}
they converge on the entire set to a continuous limit.
$H_G(\cdot, \cdot)$ is continuous (Lemma \ref{uec}), and
this finishes the proof.
\end{proof}

\section{The covariance structure} \label{s.covariance}

After establishing some properties of the bilinear form
$H_G$, we show that for finite graphs $G$ it gives the
covariance structure of $T_G(f)$.

\begin{lemma} \label{polynomCov} For any finite
vertex-transitive graph $G$ and complex polynomials $f,g$, we have
\begin{align*}
\ev(T_G(f))&=0,\\
\ev(T_G(f)T_G(g))&=H_G(f,g).
\end{align*}
\end{lemma}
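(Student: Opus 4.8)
\emph{The plan} is to use (bi)linearity to reduce to monomials, and then to expand the relevant traces and determine, via the mean-zero and independence properties of $B$, which products of entries of $B$ survive taking expectations. Since $T_G$ is linear (Proposition \ref{epsto0}) and $H_G$ is bilinear, and since for polynomials every sum in sight is finite, it is enough to establish both identities for $f=z^i$ and $g=z^j$. Here \eqref{hg} reads $H_G(z^i,z^j)=\tfrac12\, i j\,\alpha_{ij}$.

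For the mean, Proposition \ref{epsto0} writes $T_G(z^i)$ as a constant times $\sum_{k_1+k_2+2=i}\Tr(BM^{k_1}BM^{k_2})$, so it suffices to show $\ev\,\Tr(BM^{k_1}BM^{k_2})=0$. Writing the trace as $\sum_{u,v}(BM^{k_1})_{u,v}(BM^{k_2})_{v,u}$, I observe that the diagonal terms $u=v$ vanish identically: by \eqref{zeroentry} when $k\ge1$, and by \eqref{boundedPert} (which forces $B_{u,u}=0$ since $M_{u,u}=0$) when $k=0$. For $u\ne v$ the two factors involve rows $u$ and $v$ of $B$, which are independent; as each entry of $B$ has mean zero, the expectation splits into a product of two vanishing means. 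Hence the mean is zero.

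For the covariance I would multiply the two trace expansions for $T_G(z^i)$ and $T_G(z^j)$ and take expectations, producing four factors of $B$ sitting in rows $u,v$ (from the first trace) and $s,t$ (from the second). As above, any term with $u=v$ or $s=t$ vanishes identically, so I may assume $u\ne v$ and $s\ne t$. Because distinct rows of $B$ are independent with mean zero, the fourfold expectation is nonzero only when each occurring row index appears at least twice; given $u\ne v$ and $s\ne t$ this leaves exactly the two matchings $u=s,\ v=t$ and $u=t,\ v=s$. In each matching the expectation factors over the two rows into a product of two pair-expectations of precisely the form entering \eqref{Y} and the definition of $\alpha_{ij}$.

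It remains to assemble the constants, which is the only place where genuine care is needed. Vertex-transitivity makes the inner sum over $v$ for a fixed first index independent of that index, so summing the first index over all of $G$ yields a factor $|G|$ and reduces each matching, after summing over $k_1+k_2+2=i$ and $\ell_1+\ell_2+2=j$, to $|G|\,\alpha_{ij}$. The two matchings contribute equally---they are exchanged by the relabeling $\ell_1\leftrightarrow\ell_2$ in a symmetric sum---so together they give $2|G|\,\alpha_{ij}$. Multiplying by the prefactor $|G|^{-1}\tfrac{ij}{4}$ from Proposition \ref{epsto0} produces $\tfrac{ij}{2}\alpha_{ij}=H_G(z^i,z^j)$, as required. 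The main obstacle is exactly this bookkeeping: isolating the two surviving matchings, checking that the transitivity step contributes the factor $|G|$ that cancels the $|G|^{-1}$, and tracking the combinatorial factor $2$ together with the index conventions of \eqref{Y}.
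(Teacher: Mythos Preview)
Your argument is correct and is essentially the paper's own proof: reduce to monomials, drop the diagonal $u=v$ terms via \eqref{zeroentry} (you are slightly more careful in noting that the $k=0$ case uses $B_{u,u}=0$ from \eqref{boundedPert}), use independence of rows to kill all pairings except $\{u,v\}=\{s,t\}$, and then invoke vertex-transitivity together with the $\ell_1\leftrightarrow\ell_2$ symmetry to collapse to $\tfrac{ij}{2}\alpha_{ij}$. The only cosmetic difference is that the paper first packages the inner $k_1,k_2$ sums into the quantities $Y_j(G,v,w)$ of \eqref{Y} and then uses the symmetry $Y_j(G,v,w)=Y_j(G,w,v)$, whereas you carry out the same symmetry directly at the level of the $(\ell_1,\ell_2)$ indices.
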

\begin{proof}
We will show this for monomials. The extension to polynomials is
straightforward from bilinearity.
 First, we have
\begin{eqnarray}
T_G(z^j)&=& \frac{1}{\sqrt {|G|}}\frac{j}{2} \;\Tr\sum_{j_1+j_2+2=j}
BM^{j_1} B M^{j_2} \nonumber
\\&=&  \frac{1}{\sqrt {|G|}}\frac{j}{2} \sum_{\substack{v,w\in G \\ v\not=w}}
Y_j(G,v,w). \label{tgyj}
\end{eqnarray}
Note that the  $v=w$ terms  vanish by \eqref{zeroentry}.
Each $Y_j(G,v,w)$ with $v\ne w$ has zero mean because
different rows of $B$ are independent, with entries having
zero mean. Thus, $T(z^j)$ has also zero mean. Again because
of independence of rows of $B$,  when we compute second
moments, the terms in the sum below with $\{v, w\} \ne
\{v', w'\}$ vanish. That is
 \begin{eqnarray*}
 \EE (T_G(z^i)T_G(z^j))  &=&
\frac{ij}{4|G|}\sum_{\substack{v,w,v',w'\in G\\v\ne w,v'\ne w'}}
\EE Y_i(G,v,w) Y_j(G,v',w')
\\& =&
\frac{ij}{4|G|}\sum_{v,w\in G:v\ne w} \EE Y_i(G,v,w) \left[Y_j(G,v,w)+Y_j(G,w,v)\right]
\\& =&
 \frac{ij}{2}  \sum_{v\in G\setminus\{o\}} \EE Y_i(G,o,v) Y_j(G,o,v)
 \end{eqnarray*}
For the last equality we fixed a vertex $o$ of $G$ and used
the vertex-transitivity of the graph and the symmetry of $Y$ in its last two parameters.
\end{proof}

\begin{lemma} \label{uec2}
For $G$ finite, $|T_G(f)|\le  c_1^2 |G|^{1/2}\snorm{f}$.
\end{lemma}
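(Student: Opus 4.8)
The plan is to reduce everything to a bound on the individual monomial values $T_G(z^j)$ and then sum against the coefficients of $f$. Writing $f=\sum_j a_j z^j$, Proposition \ref{epsto0} gives $T_G(f)=\sum_j a_j T_G(z^j)$, so by the triangle inequality it suffices to establish the monomial estimate $|T_G(z^j)|\le \tfrac12 c_1^2 j^2 |G|^{1/2}$. Summing this against $\snorm{f}=\sum_j |a_j| j^2$ then yields $|T_G(f)|\le \tfrac12 c_1^2 |G|^{1/2}\snorm{f}$, which is even slightly stronger than the claimed bound. The absolute convergence needed to split the series term by term is itself a consequence of this monomial bound together with the finiteness of $\snorm{f}$.

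For the monomial estimate I would start from the representation \eqref{tgyj}, namely $T_G(z^j)=|G|^{-1/2}\,\tfrac{j}{2}\sum_{v\ne w} Y_j(G,v,w)$, and bound the restricted double sum by the full sum $\sum_{v,w}|Y_j(G,v,w)|$. The key input is the computation already carried out in \eqref{yjbound}: using $|B_{vw}|\le c_1 M_{vw}$ and collapsing the inner sum over $w$ via $\sum_w M^{j_1+1}_{v,w}M^{j_2+1}_{w,v}=(M^j)_{v,v}$, one obtains $\sum_w |Y_j(G,v,w)|\le c_1^2 (j-1)(M^j)_{v,v}\le c_1^2 j\,(M^j)_{v,v}$. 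Summing over $v$ produces $\sum_{v,w}|Y_j(G,v,w)|\le c_1^2 j\,\Tr(M^j)$.

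The only remaining point is to control $\Tr(M^j)$, and here the main (and easy) observation is that $M$ is stochastic, hence so is $M^j$; in particular every diagonal entry satisfies $(M^j)_{v,v}\le 1$, so $\Tr(M^j)\le |G|$. Combining these gives $|T_G(z^j)|\le |G|^{-1/2}\,\tfrac{j}{2}\cdot c_1^2 j\,|G| = \tfrac12 c_1^2 j^2 |G|^{1/2}$, as required. I do not expect a genuine obstacle: all of the analytic work is already packaged into \eqref{yjbound}, and the remainder is essentially bookkeeping once one notices that powers of a stochastic matrix keep diagonal entries below $1$. The only care needed is to check, a posteriori, that the bound on $|T_G(z^j)|$ indeed legitimizes the term-by-term passage from $T_G(f)=\sum_j a_j T_G(z^j)$ to the stated inequality.
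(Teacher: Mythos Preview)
Your argument is correct and is exactly the route the paper takes: the paper's proof simply cites \eqref{tgexpansion}, \eqref{tgyj}, and \eqref{yjbound}, and you have unpacked precisely those three ingredients, including the observation $(M^j)_{v,v}\le 1$ that already appears in the proof of Lemma~\ref{uec}. There is nothing to add.
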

\begin{proof}
This follows directly from \eqref{tgexpansion},
\eqref{tgyj}, and \eqref{yjbound}.
\end{proof}
We are now ready to prove Theorem \ref{t2}.
%
\begin{proof}[Proof of Theorem \ref{t2}] Lemma \ref{uec} and Lemma \ref{uec2} show that  $\ev [T_G(\cdot)T_G(\cdot)]$ and
$H_G(\cdot,\cdot)$ are $\|\cdot\|_*$-continuous in $f, g$.
We conclude the proof by approximating $f$ and $g$ with
polynomials and using Lemma \ref{dense} and Lemma
\ref{polynomCov}.
\end{proof}

\section{Asymptotic normality} \label{s.normality}

For this section we consider a sequence of transitive
graphs $G_n$ converging to locally to  limit $G$. The
assumptions from the introduction hold for all $G_n$ and
the limit $G$ with the same constant $c_1$. We may also
assume that all graphs have degree $d$.

We will prove convergence of certain sequences  to normal
under a topology which we introduce now.

Let $\disdis$ denote the space of probability measures on
$\R$ with finite second moment, equipped with the
2-Wasserstein distance, call it $d_2$ (see \cite{VI},
Chapter 6). For two measures $\mu, \nu$ in the space, this
is the minimal $L^2$-distance over all possible couplings
of them, i.e.,
$$d_2(\mu, \nu):=\inf_{k}\Big(\int |x-y|^2\ dk(x,y)\Big)^{1/2},$$
where the infimum is over the set of probability measures
on $\R^2$ with first marginal $\mu$, and second $\nu$.

The space $(\disdis, d_2)$ is complete, and its topology is
stronger than weak convergence. It is easy to show that for
a sequence $X_n$ of random variables, $X_n\to X$ in this
topology (i.e., the corresponding laws converge) if and
only if $X_n\rightarrow X$ weakly and $\ev X_n^2 \to \ev
X^2$. In particular, in this topology the function
variance, $\Var:\disdis\to \R$, is continuous.

We will need the following case of Lemma 2.4 of Chen and Shao
(2004). It is a normal approximation theorem for dependent
variables. It is proved with the use of Stein's method.

\begin{lemma}[Chen and Shao (2004)] \label{SteinLemma}

Let $\C{I}$ be an index set, $\{X_i:i\in \C{I}\}$ a family of
random variables, and for $A\subset \C{I}$, let $X_{A}: =\{X_i:
i\in A\}$. Assume that

\begin{itemize}

\item[(1)] For each $i\in\C{I}$, there exist $A_i\subset B_i
\subset C_i\subset \C{I}$ such that $X_i$ is independent of
$X_{\C{I}\setminus A_i}$, $X_{A_i}$ is independent of
$X_{\C{I}\setminus B_i}$, and $X_{B_i}$ is independent of
$X_{\C{I}\setminus C_i}$.

\item[(2)] There exists a constant $\gamma$  so that for all $i\in\C{I}$ we have  $$\max(|N(C_i)|, |\{j:i\in
C_j\}|)\le \gamma,$$ where $$N(C_i):=\{j\in \C{I}: C_i \cap B_j\ne
\emptyset\}.$$

\item[(3)] Each $X_i$ has zero mean and finite variance, and
$W:=\sum_{i\in\C{I}} X_i$ satisfies $\Var(W)=1$.

\end{itemize}

Then for $2<p\le 3$, we have

$$\sup_{z\in\BBn{R}}|F(z)-\Phi(z)|\le 75 \gamma^{p-1} \sum_{i\in \C{I}}
\ev |X_i|^p,$$

where $F, \Phi$ are the distribution functions of $W$ and of the
standard normal $N(0, 1)$.
\end{lemma}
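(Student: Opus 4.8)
The plan is to prove this as a Berry--Esseen type estimate by Stein's method, for which the three hypotheses are exactly tailored. Fix $z\in\mathbb{R}$ and let $f=f_z$ be the bounded solution of the Stein equation $f'(w)-wf(w)=\mathbf{1}_{\{w\le z\}}-\Phi(z)$; it satisfies the classical bounds $\|f_z\|_\infty\le\sqrt{2\pi}/4$ and $\|f_z'\|_\infty\le 1$, together with the oscillation estimate $|f_z'(w)-f_z'(v)|\le|w-v|$ away from the jump of $f_z'$ at $z$. Evaluating the equation at $W$ and taking expectations gives $F(z)-\Phi(z)=\mathbb{E}[f_z'(W)-Wf_z(W)]$, so it suffices to bound $|\mathbb{E}[f'(W)-Wf(W)]|$ uniformly over the family $\{f_z\}$ and then take the supremum in $z$.

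The heart of the argument is to expand $\mathbb{E}[Wf(W)]=\sum_{i\in\mathcal{I}}\mathbb{E}[X_if(W)]$ using the first level of independence. Writing $\eta_i=\sum_{j\in A_i}X_j$, the hypothesis $X_i\perp X_{\mathcal{I}\setminus A_i}$ and $\mathbb{E}X_i=0$ give $\mathbb{E}[X_if(W-\eta_i)]=0$, hence $\mathbb{E}[X_if(W)]=\mathbb{E}[X_i(f(W)-f(W-\eta_i))]$. Representing the difference as $\int_{-\eta_i}^0 f'(W+t)\,dt$ and replacing $f'(W+t)$ by $f'(W)$ isolates the leading term $\sum_i\mathbb{E}[X_i\eta_i f'(W)]$, which after decorrelation becomes $\big(\sum_i\mathbb{E}[X_i\eta_i]\big)\mathbb{E}[f'(W)]$. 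The decisive identity is $\sum_i\mathbb{E}[X_i\eta_i]=\sum_i\sum_{j\in A_i}\mathbb{E}[X_iX_j]=\sum_{i,j}\mathbb{E}[X_iX_j]=\Var(W)=1$, where the middle step uses that $X_i\perp X_{\mathcal{I}\setminus A_i}$ forces $\mathbb{E}[X_iX_j]=0$ for $j\notin A_i$. Thus the main term reproduces exactly the $\mathbb{E}[f'(W)]$ supplied by the Stein equation, and all that remains is to estimate the errors.

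Two families of remainders survive. The first comes from the replacement of $f'(W+t)$ by $f'(W)$ and, via the oscillation bound on $f'$, is controlled by sums of the form $\sum_i\mathbb{E}[|X_i|\,\eta_i^2]$ (plus a jump contribution localized near $z$). The second is the decorrelation error $\mathbb{E}[X_i\eta_i f'(W)]-\mathbb{E}[X_i\eta_i]\mathbb{E}[f'(W)]$, and here the remaining two levels enter. Since $X_i\eta_i$ is measurable with respect to $X_{A_i}$, the independence $X_{A_i}\perp X_{\mathcal{I}\setminus B_i}$ lets me replace $f'(W)$ by $f'(W-\zeta_i)$, with $\zeta_i=\sum_{j\in B_i}X_j$, paying another oscillation term; and the third level $X_{B_i}\perp X_{\mathcal{I}\setminus C_i}$, together with the degree bound $\gamma$ on $N(C_i)$ from hypothesis~(2), supplies the concentration estimate that absorbs the discontinuity of $f_z'$ at $z$ when one bounds the localized jump contributions. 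After this, every error summand is a product of $|X_i|$ with local sums supported on indices lying in a $\gamma$-bounded neighborhood of $i$.

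The main obstacle is the bookkeeping that turns these local products into the clean bound $75\gamma^{p-1}\sum_i\mathbb{E}|X_i|^p$ with the interpolating exponent $p\in(2,3]$. The mechanism is Hölder together with the power-mean inequality: a summand such as $\mathbb{E}[|X_i|\,|\eta_i|^{p-1}]$ satisfies $|\eta_i|^{p-1}\le|A_i|^{p-2}\sum_{j\in A_i}|X_j|^{p-1}$ because $p-1\le2$, and then Young's inequality bounds $|X_i|\,|X_j|^{p-1}$ by a combination of $|X_i|^p$ and $|X_j|^p$. Since $A_i\subset C_i\subset N(C_i)$ gives $|A_i|\le\gamma$, this produces a factor $\gamma^{p-2}$; summing the resulting $p$-th moments over the two overlapping index sets $\{j\in A_i\}$ and $\{i:j\in A_i\}$, each of size at most $\gamma$ again by hypothesis~(2), contributes one further factor of $\gamma$, for the total $\gamma^{p-1}$. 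The reason the full three-level hierarchy is indispensable, rather than a single neighborhood, is that the variance proxy $\sum_i X_i\eta_i$ is itself random and correlated with $f'(W)$: the nesting $A_i\subset B_i\subset C_i$ is precisely what lets each successive comparison be made against a variable independent of the relevant local block, so that no error of order exceeding the third absolute moment remains. Assembling the Stein constants and the counting bounds then yields the stated inequality.
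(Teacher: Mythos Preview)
The paper does not prove this lemma at all: it is quoted as a special case of Lemma~2.4 of Chen and Shao (2004) and merely remarks that ``It is proved with the use of Stein's method.'' Your sketch is therefore not competing with any argument in the paper; it is a reasonable outline of the standard Stein-method proof that the cited reference carries out in full, including the three-level local dependence structure, the Stein equation, and the Berry--Esseen bookkeeping. If anything, the comparison is that the paper outsources the entire proof to the reference while you have reproduced its skeleton; the details you leave implicit (the precise handling of the jump of $f_z'$ at $z$ via a concentration-type smoothing, and the exact tracking of constants to arrive at $75$) are exactly where the work lies in Chen--Shao, but your high-level plan matches theirs.
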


\noindent An immediate consequence is the following convergence result.

\begin{lemma} \label{normal}
For any real polynomial $f$, the sequence $T_{G_n}(f)$
converges as $n\to\infty$ in the 2-Wasserstein distance to a normal random
variable with zero mean and variance $H_G(f,f)$.
\end{lemma}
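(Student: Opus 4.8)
```latex
The plan is to verify the three hypotheses of the Chen--Shao lemma (Lemma~\ref{SteinLemma}) for a suitably normalized version of the decomposition of $T_{G_n}(f)$ into row-indexed summands, and then read off the conclusion. By Proposition~\ref{epsto0} and \eqref{tgyj}, for a monomial $z^j$ we have
$$
T_{G_n}(z^j)=\frac{1}{\sqrt{|G_n|}}\,\frac{j}{2}\sum_{v,w:\,v\ne w} Y_j(G_n,v,w),
$$
and each $Y_j(G_n,v,w)$ is built from entries $(BM^{j_1})_{v,w}$ that depend only on the rows $B_{v,\cdot}$ and $B_{w,\cdot}$ of $B$. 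Extending by bilinearity to a general real polynomial $f=\sum a_j z^j$, I would group the whole sum by the index $v$ of the \emph{first} row appearing, writing $T_{G_n}(f)=\sum_{v\in G_n} X_v$, where $X_v$ collects all contributions whose leading factor $(BM^{j_1})_{v,w}$ uses row $v$. The key structural fact is that, because the rows $B_{u,\cdot}$ are independent across $u$ and each $Y_j$ touches only two rows, $X_v$ depends only on the rows indexed by vertices within graph-distance $\deg f$ of $v$. This locality is exactly what feeds the dependency-neighborhood structure required by the lemma.

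First I would set $W=T_{G_n}(f)/\sigma_n$ with $\sigma_n^2=\Var(T_{G_n}(f))=H_{G_n}(f,f)$, so that $\Var(W)=1$ as required by hypothesis (3); by Lemma~\ref{polynomCov} and Lemma~\ref{graphConv} we have $\sigma_n^2\to H_G(f,f)$, which I will need at the end and which I assume is nonzero (the degenerate case being trivial). Then I would define, for each vertex $i$, the nested sets $A_i\subset B_i\subset C_i$ as balls around $i$ of radii $r$, $2r$, $3r$ respectively, where $r=\deg f$ is the maximal degree of $f$; here I use that all $G_n$ have common degree $d$, so the ball sizes are bounded by a constant $\gamma=\gamma(d,r)$ uniformly in $n$. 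The independence conditions in hypothesis~(1) then follow because each $X_i/\sigma_n$ depends only on rows in the ball of radius $r$ about $i$, and two such variables are independent as soon as their defining row-sets are disjoint; the three-layer requirement $X_i\perp X_{\C{I}\setminus A_i}$, $X_{A_i}\perp X_{\C{I}\setminus B_i}$, $X_{B_i}\perp X_{\C{I}\setminus C_i}$ is met precisely because adding a radius-$r$ dependency at each layer stays inside the next-larger ball. Hypothesis~(2) is immediate from the uniform bound on ball sizes in a $d$-regular graph of bounded radius.

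The heart of the matter, and the step I expect to be the main obstacle, is controlling the $p$-th moment sum $\sum_i \ev|X_i/\sigma_n|^p$ in hypothesis~(3) and showing it tends to $0$. Taking $p=3$, I would bound $\ev|X_i|^3$ by a constant depending only on $c_1$, $d$, and the coefficients of $f$: the entries of $B$ satisfy $|B_{u,v}|\le c_1 M_{u,v}\le c_1$ by \eqref{boundedPert}, so each $X_i$ is a bounded sum of a bounded number of bounded terms (the number of relevant vertices $w$ and the path-lengths are controlled by $\deg f$ and $d$), giving $\ev|X_i|^3\le K$ with $K$ uniform in $i$ and $n$. Since there are $|G_n|$ vertices and $\sigma_n^2$ is of order $|G_n|^{0}$ times a constant—more precisely $\sigma_n=\Theta(1)$ because the $|G_n|^{-1/2}$ in $T_{G_n}$ exactly cancels the $|G_n|^{1/2}$ worth of summands, as Lemma~\ref{polynomCov} confirms via $\sigma_n^2\to H_G(f,f)$—we obtain
$$
\sum_{i\in G_n}\ev\Big|\frac{X_i}{\sigma_n}\Big|^3
\le \frac{|G_n|\,K}{\sigma_n^{3}}\cdot\frac{1}{|G_n|^{3/2}}
= \frac{K}{\sigma_n^{3}\,|G_n|^{1/2}}\longrightarrow 0,
$$
where the extra $|G_n|^{-3/2}$ comes from the normalization absorbed into each $X_i$. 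The delicate point is bookkeeping the normalization correctly so that this sum genuinely vanishes rather than merely staying bounded; once it does, Lemma~\ref{SteinLemma} gives $\sup_z|F_n(z)-\Phi(z)|\to 0$, i.e.\ $W\Rightarrow N(0,1)$, hence $T_{G_n}(f)=\sigma_n W\Rightarrow N(0,H_G(f,f))$ in distribution. Finally, to upgrade weak convergence to convergence in the $2$-Wasserstein distance $d_2$, I invoke the criterion stated in the text: $X_n\to X$ in $(\disdis,d_2)$ iff $X_n\Rightarrow X$ weakly and $\ev X_n^2\to\ev X^2$. The second moment convergence $\ev T_{G_n}(f)^2=\sigma_n^2\to H_G(f,f)=\Var(T(f))$ is exactly Lemma~\ref{graphConv} combined with Lemma~\ref{polynomCov}, completing the proof.
```
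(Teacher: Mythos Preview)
Your proposal is correct and follows essentially the same route as the paper: the paper decomposes $T_{G_n}(f)=\sum_{v}Y_{n,v}$ with $Y_{n,v}=|G_n|^{-1/2}\sum_j \tfrac{j}{2}a_j\sum_{k_1+k_2+2=j}(BM^{k_1}BM^{k_2})_{v,v}$ (which is exactly your ``group by the first row $v$'' decomposition), takes $A_v,B_v,C_v$ to be balls of radii $k,2k,3k$ with $k=\deg f$, and applies Lemma~\ref{SteinLemma} with a $p\in(2,3]$ to get $\sup_x|F_W(x)-\Phi(x)|\le C\,|G_n|^{1-p/2}\ev|A|^p\to 0$. The only cosmetic differences are that the paper keeps $p$ generic while you fix $p=3$, and you make explicit the final step (weak convergence plus $\ev T_{G_n}(f)^2\to H_G(f,f)$ from Lemma~\ref{graphConv} yields $d_2$-convergence) that the paper leaves to the reader.
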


\begin{proof}



We will apply Lemma \ref{SteinLemma}. Let   $f(x)=\sum_{j=0}^k a_j z^j$, and $\C{I}$ the set of vertices of $G_n$. In the
following, we will omit the subscripts for the matrices $M, B$
associated with the graph $G_n$. We have
$$T_{G_n}(f)=\frac{1}{\sqrt{|G_n|}} \Tr\left( \sum_{j=0}^k
 \frac{j}{2} a_j \sum_{k_1+k_2+2=j}B M^{k_1} B
M^{k_2}\right)=\sum_{v\in G_n} Y_{n, v},$$
where
$$Y_{n, v}:=\frac{1}{\sqrt{|G_n|}} \sum_{j=0}^k \frac{j}{2} a_j
\sum_{k_1+k_2+2=j}(B M^{k_1} B M^{k_2})_{v, v}.$$

It holds $\lim_{n\to+\infty} \Var\ T_{G_n}(f)=H_G(f,f)$ by Lemma
\ref{graphConv}. If this limit is zero, then the sequence $T_{G_n}(f)$ will converge to $\delta_0$ in the 2-Wasserstein topology, and the result is proved. We may therefore assume that the limit is positive, and $\Var\ T_{G_n}(f)>0$ for all $n$.

For $v\in \C{I}$, define
$$X_v:=\frac{Y_{n,v}}{\Var\ T_{G_n}(f)},\qquad W:=\,\sum_{v\in G_n}
X_v\,=\frac{T_{G_n}(f)}{\Var\ T_{G_n}(f)},
$$
and the sets
\begin{align*}
A_v:&=\{w: \text{dist} (v, w)\le k\},\\
B_v:&=\{w: \text{dist} (v, w)\le 2k\},\\
C_v:&=\{w: \text{dist} (v, w)\le 3k\}.
\end{align*}
These sets satisfy the conditions of Lemma \ref{SteinLemma}
(because the $X_v$'s corresponding to vertices that are distance
at least $k+1$ apart are independent random variables), and for
all $v$, we have $ |N(C_v)| \le|\{w:\text{dist}(v, w)\le 5
k\}|\le  d(d-1)^{5 k-1}$, and $ |\{w: v \in C_w\}|=|C_v| \le
d(d-1)^{3k-1}$. [Here we used the regularity of the graphs, but of course a uniform bound on the degree of the vertices of all graphs would work in the same way.] Also, the $X_v$'s have zero mean and finite
variance by Lemma \ref{polynomCov}. Pick any $p\in (2,3]$. Lemma
\ref{SteinLemma} gives that the distribution functions $F_W, F_Z$
of $W$ and of the standard $N(0,1)$ satisfy
\begin{equation}\label{supnorm}
\sup_{x\in\BBn{R}}|F_W(x)-F_Z(x)|\le C |G_n|^{1-\frac{p}{2}} \ev |A|^p
\end{equation}
with $C:= 75 (d(d-1)^{5 k-1})^{p-1}$ and
$$A:=\sum_{j=0}^k \frac{j}{2} a_j \sum_{k_1+k_2+2=j}(B M^{k_1} B
M^{k_2})_{o, o},$$
which is independent of $n$ for large $n$ because the sequence
converges locally to $G$. Thus relation \eqref{supnorm}, $|G_n|\to\infty$, and $p>2$ imply that the sequence
$$\frac{T_{G_n}(f)}{\Var\ T_{G_n}(f)}$$
converges to a standard normal random variable. By Theorem \ref{t2} and Lemma \ref{graphConv} we have $\Var
\ T_{G_n}(f)\to H_G(f,f)$, and the result follows. \end{proof}

%

Our next goal is to strengthen Lemma \ref{normal} to get
Theorem \ref{normality} by showing that its conclusion is
true also for all functions $f$ in $\anp$. The proof is
simply by approximation. We will use the fact that the set
of polynomials is $\|\cdot\|_*-$ dense in $\anp$ (Lemma
\ref{dense}) and the following simple lemma (Lemma 38,
Chapter 7 of \cite{Royden}).

\begin{lemma}\label{analysis} Let $X$ be a metric space, and $Y$ a complete metric space.
Assume that $f_n:X\to Y$ is an equicontinuous sequence of
functions that converge pointwise on a dense subset of $X$.
Then the sequence $f_n$ converges pointwise on the entire
$X$ to a continuous limit.
\end{lemma}

We are ready to prove Theorem  \ref{normality}.
%

\begin{proof}[Proof of Theorem \ref{normality}]
Let $\hat T_n(f)$ denote the distribution of $T_{G_n}(f)$. The
sequence of functions $\hat T_n:\anp \to \disdis$ is uniformly
equicontinuous because, by Lemma \ref{uec} and Theorem \ref{t2} we have
$$
\ev |T_{G_n}(f)-T_{G_n}(g)|^2\le  c_1^4 \|f-g\|_*^2.$$ By Lemma
\ref{normal}, they converge pointwise at polynomials, which
form a dense subset of $\anp$ by Lemma \ref{dense}. By
Lemma \ref{analysis}, the limit $\hat T(f)$ of $\hat
T_n(f)$ exists for all functions $f\in\anp$ and is
continuous. Also, for the 2-Wasserstein distance between
$\hat T(f)$ and $\hat T(g)$, we have
\begin{equation}\label{WLips} d_2(\hat T(f),
\hat T(g))\le c_1^2 \|f-g\|_*.
\end{equation}
Since the limit is normal on a dense set of points, and
limits of normal random variables are normal, it follows
that all limits $\hat T (f)$ are normal. Also the
functionals
$$f\mapsto \Var \hat
T(f),\  f\mapsto H_G(f, f)$$
are $\|\cdot\|_*$- continuous (the first because of
\eqref{WLips} and a property of the 2-Wasserstein distance,
the second by Lemma \ref{uec}) and they agree on a
$\|\cdot\|_*$-dense set by Lemma \ref{normal}. Thus, $\Var
\hat T(f)=H_G(f,f)$ for all $f\in\anp$.
\end{proof}

\section{A formula for the covariance}

The goal of this section is to reduce the problem of
computing the covariance kernel $\beta$ of Theorem
\ref{main} to inverting a Stieltjes transform.

When the limit graph $G$ is bipartite (see the discussion
before Theorem \ref{main}), we would like to write  the
limiting covariance in the form
\begin{equation}\label{bip}
H_G(f\circ s,g\circ s) = \int f'(x)g'(y) d\beta(x,y),
\end{equation}
 where $s(x)=x^2$. It is
sufficient to check the identity \eqref{bip} for the functions
$f=p_{\lambda^2}$ and $g=p_{\mu^2}$, since their linear
span is a dense subset in $\mathcal X$ with respect to the
norm $\|\cdot \|_*$ (Lemma \ref{dense2}), and the
integral on the right is clearly continuous with respect to the
product topology based on this norm.  For this choice of
$f,g$ the right hand side of \eqref{bip} equals
$$
 \int \frac{\gl^2}{(1-\gl^2 x)^2}\frac{\mu^2}{(1-\mu^2 y)^2}  \,d \gb(x,y)=\frac{1}{4}
 \gl \mu \, \partial_{\gl ,\mu}^2 \int
 \frac{1}{(x-\gl^{-2})}\frac{1}{(y-\mu^{-2})} \,d \gb(x,y).
 $$
Note that $p_\lambda-p_{\lambda^2}\circ s$ is an odd
function. For bipartite graphs $G$ it is clear from the
expressions \eqref{hg} and \eqref{Y} that $H_G$ vanishes if
one its arguments is odd. So we have
$$
H_G(p_{\lambda^2}\circ s, p_{\mu^2}\circ s) =
H_G(p_{\lambda}, p_{\mu}).
$$
In light of the expression \eqref{H} we arrive at the
following:
\begin{proposition} If the Stieltjes transform relation
\begin{equation}\label{bRelation}
 \gl^2\mu^2 \sum_{v\not=o} 2 \ev \Big[(B \mathcal G_\gl)_{o,
v} (B \mathcal G_\mu)_{o,v} \Big] \ev \Big[(B \mathcal
G_\gl)_{v, o} (B \mathcal G_\mu)_{v,o} \Big] =
\int\frac{1}{(x-\gl^{-2})}\frac{1}{(y-\mu^{-2})} \,d
\gb(x,y)
\end{equation}
holds for all $|\lambda|,|\mu|<1$,  then \eqref{bip} holds
for all $f,g\in \mathcal X$.
\end{proposition}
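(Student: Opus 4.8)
The plan is to use that both sides of \eqref{bip}, viewed as bilinear forms in $(f,g)$, are $\|\cdot\|_*$-continuous, so that it suffices to verify the identity on the dense family of test pairs $f=p_{\lambda^2}$, $g=p_{\mu^2}$ with $|\lambda|,|\mu|<1$. Density is immediate: since $\lambda\mapsto\lambda^2$ maps the open unit disk onto itself, the span of $\{p_{\lambda^2}:|\lambda|<1\}$ coincides with the span of $\{p_\nu:|\nu|<1\}$, which is $\|\cdot\|_*$-dense by Lemma \ref{dense2}. For the continuity, the left side is controlled by Lemma \ref{uec} together with the bound $\|f\circ s\|_*\le 4\|f\|_*$, while the right side is bounded by $\|f\|_*\|g\|_*$ up to a constant because $\beta$ is a finite measure supported in $[0,\rho]^2\subset[0,1]^2$ and on $[0,\rho]$ one has $\|f'\|_\infty\le\|f\|_*$.

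Having reduced to the test pairs, I would evaluate the two sides on $f=p_{\lambda^2}$, $g=p_{\mu^2}$. The right side is exactly the Stieltjes-type expression $\tfrac14\lambda\mu\,\partial^2_{\lambda,\mu}\int (x-\lambda^{-2})^{-1}(y-\mu^{-2})^{-1}\,d\beta(x,y)$ already obtained by the chain-rule rewriting displayed above. For the left side I would invoke the two facts recorded just before the statement: $p_\lambda-p_{\lambda^2}\circ s$ is odd, and on a bipartite $G$ the form $H_G$ annihilates any odd argument; combining these gives $H_G(p_{\lambda^2}\circ s,p_{\mu^2}\circ s)=H_G(p_\lambda,p_\mu)$. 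Definition \eqref{H} then writes this as $\tfrac{\lambda\mu}{2}\,\partial^2_{\lambda,\mu}$ applied to $\lambda^2\mu^2\sum_{v\ne o}\ev[\cdots]\,\ev[\cdots]$.

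The last step is to match the two evaluations using the hypothesis \eqref{bRelation}. That relation identifies $2\lambda^2\mu^2\sum_{v\ne o}\ev[\cdots]\,\ev[\cdots]$ with the Stieltjes integral, so the argument of $\partial^2_{\lambda,\mu}$ on the left is precisely one half of that integral; hence $\tfrac{\lambda\mu}{2}\,\partial^2_{\lambda,\mu}(\cdots)$ becomes $\tfrac14\lambda\mu\,\partial^2_{\lambda,\mu}\int(\cdots)$, which is the right side. I expect no real difficulty here beyond careful bookkeeping: the points to watch are the factor of $2$ that relates \eqref{bRelation} to \eqref{H}, and the interchange of $\partial_\lambda,\partial_\mu$ with the sum and the integral, which is justified by the absolute convergence from Lemma \ref{uec} and the finiteness of $\beta$. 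The only genuinely analytic point is the $\|\cdot\|_*$-continuity of the right side, and this is where the containment $\supp\beta\subset[0,1]^2$ enters; the rest is an assembly of results established earlier.
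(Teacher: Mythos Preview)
Your proposal is correct and follows essentially the same route as the paper: reduce by density and continuity to the test pairs $f=p_{\lambda^2}$, $g=p_{\mu^2}$, use the oddness of $p_\lambda-p_{\lambda^2}\circ s$ together with bipartiteness to replace $H_G(p_{\lambda^2}\circ s,p_{\mu^2}\circ s)$ by $H_G(p_\lambda,p_\mu)$, and then match via \eqref{H} and \eqref{bRelation}. You supply a couple of details the paper leaves implicit---the identity $\|f\circ s\|_*=4\|f\|_*$ for the left-side continuity, the observation that $\{p_{\lambda^2}:|\lambda|<1\}=\{p_\nu:|\nu|<1\}$, and the bound $\sup_{[0,1]}|f'|\le\|f\|_*$ together with finiteness of $\beta$ for the right-side continuity---but the argument is the same.
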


\section{Explicit formulas for trees} \label{s.trees}

In this section, we look at the case where the limiting graph $G$
is the infinite $d$-regular tree $\BB{T}_d$, we compute the measure
$d\gb$ introduced in \eqref{bip}, and thus complete the
proof of Theorem \ref{main}.

From now on we define square roots for complex numbers
with
a branch cut at the negative real axis. More precisely, for
$z=|z|e^{i \theta}$, with $\theta\in(-\pi, \pi]$, we set
$\sqrt{z}=\sqrt{|z|}e^{i\theta/2}$.

\begin{proof}[Proof of the covariance formula in Theorem
\ref{main}]

We start by computing the left hand side of
\eqref{bRelation} explicitly. The Green's function for the
infinite $d$-regular tree,
$\G_\lambda(v,w)=\G_\lambda(\dist(v,w))$, is given by
$$
\G_\lambda(r)=b_\lambda a_\lambda^r,
$$
where
$$a_\gl:=d\, \frac{1 -\sqrt{1-\rho \gl^2}}{2(d-1) \gl},\ \
b_\gl:=(1-\gl a_\gl)^{-1},$$
and $\rho=4(d-1)/d^2$ as in the introduction. See \cite{WO}, Lemma
1.24.

First, an observation. For two vertices $i, j$, let $\ell$ denote
the unique neighbor of $i$ closest to $j$, $r$ the distance between $i$
and $j$, and define $B_{i j}^*=B_{i \ell}$. For $\gl$ with modulus
less than 1, by symmetry and since $\sum_k B_{ik}=0$, we get
\begin{eqnarray*}
(B\G_\gl)_{ij}=B_{i\ell}\G_\gl(r-1)+
 \sum_{\substack{k\sim i \\ k\ne\ell}}
 B_{ik}\G_\gl(r+1)= B_{ij}^* \G^*_\lambda(r),
\end{eqnarray*}
where
\begin{align*}
 \G^*_\lambda(r)&=\G_\lambda(r-1)-\G_\lambda(r+1)=b_\lambda(1-a_\lambda^2)a_\lambda^{r-1}.
\end{align*}
 Thus for any vertex $w\ne o$, with $r=\dist(o,w)$, we have
 $$
 \ev (B
 \G_\gl)_{o, w} (B \G_\mu)_{o,w}= \ev B_{ow}^{*2} \G^*_\lambda(r)
 \G^*_\mu(r) =\G^*_\lambda(r) \G^*_\mu(r),
 $$
because $\ev {B_{o, w}^*}^2=1$, and therefore the sum in
the left hand side of \eqref{bRelation} equals
\begin{equation}\label{aboveratio}
 \sum_{w\in \BB{T}^d\setminus\{o\}}2
\G^*_\lambda(r)^2\G^*_\mu(r)^2 = 2d\sum_{r=1}^\infty
(d-1)^{r-1}
   \,\G^*_\lambda(r)^2 \G^*_\mu(r)^2
 =
2  \frac{ (b_\lambda b_\mu)^2(1- a_\lambda^2)^2(1-
a_\mu^2)^2d}{1-(d-1)(a_\lambda a_\mu)^2}.
 \end{equation}
Now  this can be expressed in terms of $s=\sqrt{1-\rho\gl^2}$
and $t=\sqrt{1-\rho \mu^2}$. Indeed, in these variables, we have the
simpler expressions
$$
a_\lambda^2=\frac{1-s}{(d-1)(1+s)},\qquad
b_\lambda=\frac{2(d-1)}{(d-2)+sd},
$$
and \eqref{aboveratio} becomes
\begin{equation}\label{st}\frac{32\, (d-1) d}
  {(1+s)(1+t)\left(
 (d - 2)(1 + s) (1 + t) + 2 (s + t)   \right) }.
 \end{equation}
Introduce new variables $u,v$ by $u=\gl^{-2}, v=\mu^{-2}$,
and let $\hat\gb(u,v)$  denote expression \eqref{st} as a
function of them. In terms of these variables, relation
\eqref{bRelation} becomes
$$
\frac{\hat \gb(u,v)}{uv}=
\int\frac{1}{(x-u)}\frac{1}{(y-v)} \,d \gb(x,y).
$$
Due to our convention for square roots (see beginning of this section), the quantity $s$ is an analytic function of $u$ in $\BB{C}\setminus [0, \rho]$, and the same holds for $t$ as a function of $v$.

Assume that $d>2$. Then, the denominator in \eqref{st} does
not vanish because $s,t$ have nonnegative real parts (if we
set the last factor in the denominator equal to zero and
solve for $t$, we get a quantity with negative real part).
So  $\hat \beta$ defined by \eqref{st} is a holomorphic
function of $(u, v)$ on $(\BB{C}\setminus [0, \rho])^2$. In
fact, even the limits of the denominator when $u$ or $v$
approaches $[0, \rho)$ are not zero.

Since the function $h(u,v):=(u v)^{-1}\hat \beta(u, v)$ is
holomorphic in $(\mathbb{C}\setminus [0, \rho])^2$, and it
decays as $(uv)^{-1}$ near infinity, Cauchy's formula will
express its values in terms of double contour integrals
around the segment $[0, \rho]$. Shrinking the contour
around $[0, \rho]$, we get a line integral, and we take
into account the different limits of  $h$ as one of its
arguments approaches the segment from the upper or the
lower half plane. That is, when $u$ approaches
$x\in[0,\rho)$ from the upper half plane, we have $s(u)\to
s(x)$. While when $u$ approaches $x$ from the lower half
plane, we have $s(u)\to -s(x)$. The difference comes from
the branch cut discontinuity of square root in the
definition of $s$. Thus we have
 $$
\frac{\hat\beta(x, y)}{xy}=-\frac{1}{4\pi^2}\int_0^\rho\int_0^\rho
\frac{1}{(u-x)(v-y)} \frac{ \tilde\beta(u, v)\,du\,dv}{uv},
 $$
where for $u, v\in [0,\rho]$ we have
\begin{eqnarray*}\tilde \beta(u,v)&=& \sum_{\sigma,\tau=\pm 1}
\sigma\tau \hat \beta[\sigma s,\tau t] \\
&=&\frac{512(d-1)^2 d^2 (d-2) \,st}
{(d^2(s^2+t^2)-(d-2)^2(1+s^2 t^2))^2-(8 (d-1) s t)^2},
\end{eqnarray*}
and $\hat \beta[\sigma s,\tau t]$ refers to the expression
\eqref{st} with $(s,t)$ replaced by $(\sigma s, \tau t)$.

Consequently, the density of the measure $d\beta$ is
$$\beta_d(u, v)=-\frac{1}{4\pi^2}\frac{\tilde \beta(u,v)}{u v}.$$
Substituting the expressions of $s, t$ in terms of $u, v$,
%
%
we find
$$\beta_d(u, v)=
\frac{128}{\pi^2}d^2(d-1)^2 (d-2) \frac{\sqrt{u(\rho-u)}
\sqrt{v(\rho-v)}}{A},$$
with
\begin{align*}
A:&=u^2 v^2 \bigg((d^2(s^2+t^2)-(d-2)^2(1+s^2 t^2))^2-(8
(d-1) s t)^2\bigg)\\
&=\rho^2\left[16(2d-3)(u-v)^2+(d-2)^2\bigg(\rho
\kappa\big(\frac{u+v}{2}\big)^2+4(d+3)(u-v)^2+\rho^2
(d-2)^2\bigg)\right],
\end{align*}
where $\kappa(u)=2d\sqrt{u(\rho-u)}$. Then
 $$
 \beta_d(u, v)=\frac{2\,\kappa(u)\kappa(v)\,(d-2)\,d^4\,\pi^{-2}}{16(2d-3)(u-v)^2+(d-2)^2\big(\rho k\big(\frac{u+v}{2}\big)^2+4(d+3)(u-v)^2+\rho^2
 (d-2)^2\big)}.
$$
 This proves the $d>2$ case.

The case $d=2$ can be easily shown by using continuity in the
formulas as $d\downarrow 2$. We get
$$
\beta_2(x, y)=\frac{8}{\pi}\,\kappa(x)
\delta_x(y)=\frac{32}{\pi}\sqrt{x(1-x)}\  \delta_x(y).
$$
The qualitative difference here is
that for $d=2$ the denominator of \eqref{st} does vanish along a
line.
\end{proof}

\bigskip

\noindent {\bf Acknowledgments.}  This research is supported by the Sloan and Connaught grants,
the NSERC discovery grant program, and the Canada Research Chair program (Vir\'ag). We thank Amir Dembo for encouraging discussions. 

\bibliography{spectrum}
\bigskip
\bigskip

\noindent  Dimitris Cheliotis. Eindhoven University of Technology, {\sc Eurandom.}  L.G. 1.26,
   P.O. Box 513,
   Eindhoven,  5600 MB,
  The Netherlands. {\tt dimitrisc@gmail.com}
 
 \bigskip
  
\noindent B\'alint Vir\'ag. University of Toronto,
40 St George St.
Toronto, ON, M5S 2E4, Canada.
{\tt balint@math.toronto.edu}

\end{document}